\newtheorem{theorem}{Theorem}
\newtheorem{lemma}[theorem]{Lemma}
\newtheorem{proposition}[theorem]{Proposition}
\newtheorem{corollary}[theorem]{Corollary}
\newtheorem*{theorem-nonum}{Theorem 3}
\newtheorem{fact}{Fact}
\theoremstyle{definition}
\newtheorem*{definition*}{Definition}
\newtheorem{remark}[theorem]{Remark}
\newcommand{\Z}{{\mathbb Z}}
\newcommand{\R}{{\mathbb R}}
\newcommand{\N}{{\mathbb N}}
\newcommand\B{{\mathcal B}}        
\newcommand\garbage[1]{}
\renewcommand{\P}{{\mathbb P}}
\newcommand{\aS}{S\alpha S}
\newenvironment{manualtheorem}[1]{%
	\manualtheoreminner
}{\endmanualtheoreminner}
\begin{document}
	\title{Stable CLT for deterministic systems}
	
\author{Zemer Kosloff}
\thanks{The research of Z.K. was partially supported by ISF grant No. 1570/17}
\address{Einstein Institute of Mathematics,
	Hebrew University of Jerusalem, Edmond J. Safra Campus, Jerusalem 91904,
	Israel}
\email{zemer.kosloff@mail.huji.ac.il}

\author{Dalibor Voln\'y}
\address{Laboratoire de Math\'{e}matiques Raphael Salem,
	UMR 6085, Universit\'e de Rouen Normandie, France}
\email{dalibor.volny@univ-rouen.fr}


		\keywords{Stable laws, Central limit theorem, stationary processes}
		\subjclass[2010]{37A40, 37A20, 37A35,60F99, 60G10}
	
\maketitle

\begin{abstract}
We show that for every ergodic and aperiodic probability preserving transformation and $\alpha\in (0,2)$ there exists a function whose associated time series is in the standard domain of attraction of a non-degenerate symmetric $\alpha$-stable distribution.
	\end{abstract}

\section{Introduction}
A random variable $Y$ is \textbf{stable} if there exists a sequence $Z_1,Z_2,\ldots$ of i.i.d. random variables and sequences $a_n,b_n$ such that
\[
\frac{\sum_{k=1}^n Z_k-a_n}{b_n},\ \text{converges in distribution to} \ Y,\ \text{as}\ n\to\infty. 
\] 
In other words, $Y$ arises as a distributional limit of a central limit theorem, see \cite{MR0322926}. Furthermore in this case, $b_n$ is regularly varying of index $\frac{1}{\alpha}$ which implies that $b_n=n^{1/\alpha}L(n)$ where $L(n)$ is a slowly varying function.  The Normal and the Cauchy distribution are stable distributions and one can parametrize the class of stable distribution via their characteristic functions (Fourier transform). Namely a random variable is $\alpha$-stable, $0<\alpha\leq 2$, if there exists $\sigma>0$, $\beta\in[-1,1]$ and $\mu\in\R$ such that for all $\theta\in\R$. 
\[
\mathbb{E}(\exp(i\theta Y))=\begin{cases}
\exp\left(-\sigma^\alpha|\theta|^\alpha(1-i\beta(\text{sign}(\theta)\tan(\frac{\pi\alpha}{2})+i\mu\theta)\right), &\alpha\neq 1,\\
\exp\left(-\sigma^\alpha|\theta|^\alpha(1+\frac{i\beta}{2}(\text{sign}(\theta)\ln (\theta)+i\mu\theta)\right), & \alpha=1.
\end{cases}
\]
The constant $\sigma>0$ is the dispersion parameter and $\beta$ is the skewness parameter. In this case we will say that $Y$ is a $S_\alpha(\sigma,\beta,\mu)$ random variable.  If $\mu=\beta=0$ and $\sigma>0$ then the random variable is \textbf{symmetric $\alpha$ stable} and we will abbreviate $Y$ is $S\alpha S(\sigma)$. 
See \cite{MR1280932} for a detailed account of infinite variance ($\alpha\neq 2$) stable processes and its appearance in various fields of mathematics and science. 

A \textbf{probability preserving dynamical system} is a quadruplet $(\mathcal{X},\B,m,T)$ where $(\mathcal{X},\B,m)$ is a standard probability space and  $T:\mathcal{X}\to \mathcal{X}$ is measurable and $m\circ T^{-1}=m$. The system is \textbf{aperiodic} if the collection of all periodic points is a null set.  It is \textbf{ergodic} if every $T$-invariant set is either a null or a co-null set.

A function $f:\mathcal{X}\to \R$ generates a stationary process $(f\circ T^n)_{n=1}^\infty$ and $S_n(f)=\sum_{k=0}^{n-1}f\circ T^k$ is its corresponding \textbf{sum process}. Given $Y$ a $S\alpha S(\sigma)$ random variable, a function $f$ is a \textbf{$Y$-CLT function} if there exists $b_n\to\infty$ and $a_n$ such that $\frac{S_n(f)-a_n}{b_n}$ converges in distribution to $Y$. Since the distribution of $Y$ is non-atomic, this is equivalent to: for all $t\in\R$, 
\[
\lim_{n\to\infty}m\left(\frac{S_n(f)-a_n}{b_n}\leq t\right)=\P(Y\leq t)
\] 
If in addition $a_n=0$ and $b_n=n^{1/\alpha}$ then the time series generated by $f$ is in the \textbf{standard domain of attraction} of $Y$. 

It seems that general methods of proof of the central limit theorem in the dynamical systems setting work only in the case of positive entropy systems. For example, if $\left(f\circ T^n\right)_{n=0}^\infty$ is a martingale difference sequence and $T$ has zero entropy then $f\equiv 0$. Consequently martingale approximation can hardly be used. It was a natural open problem whether  every aperiodic dynamical system admits a function which satisfies the CLT with a nondegenerate normal distribution as a limit.   

In 1986, Burton and Denker \cite{MR891642} answered this question in the affirmative by showing that for every aperiodic dynamical system, even very deterministic ones such as irrational rotations, there exists a CLT function $f$ for $Y$, a standard normal distribution.

For the moment suppose that $Y$ is a standard normal random variable. By $L_0^2$ we denote the space
of $L^2$ functions with zero mean. One can notice that the functions found by Burton and Denker are from $L_0^2$.
As remarked in \cite{MR891642}, because coboundaries are dense in $L_0^2$, the set of Y-CLT functions is dense in $L_0^2$.
As shown in \cite{MR1093661} for any sequence $b_n\to\infty$, $b_n=o(n)$, there exists a dense $G_\delta$ subset of $f\in L_0^2$
such that every probability law is a weak limit of the distributions of $(1/b_n) S_n(f)$. The set of Y-CLT functions
is therefore meagre.

 Burton and Denker asked whether there exists a function $f$ which satisfies the Weak Invariance Principle (WIP), meaning that the partial sums process $W_n:X\times [0,1]$, $W_n(t)=\frac{1}{\sqrt{n}}\sum_{k=0}^{[nt]-1}f\circ T^k$, when viewed as a random process with values in the space of C\`adl\`ag functions converges in distribution to a Brownian motion. This question was resolved in the affirmative by the second author in \cite{MR1624218} and recently we showed in \cite{MR4374685} that when $T$ is ergodic and aperiodic there exists a function $f:X\to\Z$ for which the lattice local central limit theorem holds. 

Weiss and Thouvenot showed in \cite{MR2887924} that for every free probability preserving system and random variable $Y$ there exists a function $f$ such that $\frac{1}{n}S_n(f)$ converges in distribution to $Y$. See also \cite{MR3795070} where a refined result for positive valued processes is obtained with normalizing constants of the form $\frac{1}{c_n}$ with $c_n$ a $1$-regularly varying sequence.  

In this work we show the existence of CLT functions for the whole range of symmetric $\alpha$ stable distributions with the scaling $b_n=n^{1/\alpha}$. This normalization corresponds to that for iid sequences, unlike the others mentioned above.   
\begin{theorem}\label{thm: CLT}
	Let $(\mathcal{X},\B,m,T)$ be an ergodic, aperiodic probability preserving system, For every $\alpha \in (0,2)$, $\sigma>0$, there exists $f:\mathcal{X}\to\R$  such that
	$\frac{1}{n^{1/\alpha}}S_n(f)$ converges in distribution to a $S\alpha S(\sigma)$ random variable. 
\end{theorem}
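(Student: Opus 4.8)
The guiding principle is that for $\alpha<2$ a sum of independent heavy-tailed summands is dominated by its few largest terms (the \emph{single large jump} phenomenon), so the target is a \emph{symmetric} function $f$ (symmetry makes $a_n=0$ automatic) whose large values occur along orbits as an approximately independent, symmetric, heavy-tailed array, with $m(|f|>x)\sim C x^{-\alpha}$ for the constant $C=C(\alpha,\sigma)$ dictated by the standard domain-of-attraction criterion for genuine i.i.d.\ sequences. Concretely I would take $f$ to be a superposition of sparse dyadic jump-layers, $f=\sum_m 2^m\xi_m$, where $\xi_m\in\{-1,0,+1\}$ is a symmetric marker with $m(\xi_m\neq0)\approx 2^{-m\alpha}$, and the markers are to be installed so as to be independent across scales $m$ and approximately independent along the orbit. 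The whole difficulty is that in a zero-entropy system $(f\circ T^k)_k$ can never be truly independent, so the construction must install \emph{approximate} independence of the jumps at every relevant scale simultaneously, and the proof of the limit must proceed by comparing the characteristic function of $n^{-1/\alpha}S_n(f)$ with that of an honest i.i.d.\ triangular array.

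The engine is the Kakutani--Rokhlin lemma together with a name-coding of towers. Given a tower with base $B$, height $h$, and error set of measure $<\e$, I can partition $B=\bigsqcup_{s}B_s$ indexed by strings $s\in\mathsf{A}^h$ over a finite alphabet $\mathsf{A}\subset\R$ and set $f=s_j$ on the level $T^jB$ above $B_s$; choosing $m(B_s)$ proportional to the probability of $s$ under a prescribed product law embeds that i.i.d.\ process \emph{exactly} into $(f\circ T^k)_{k<h}$ away from the error set. Applied to the dyadic decomposition, at scale $m$ the value $\pm2^m$ is seen at rate $\approx 2^{-m\alpha}$ with the two signs equally likely and independent. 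By the ergodic theorem a window of length $n$ then contains $\approx nC2^{-m\alpha}$ values of size $2^m$, so the jumps of size $2^m\approx n^{1/\alpha}$ govern $S_n(f)$, matching the normalization. Note the truncated second moment per step is $\sum_{2^m\le L}2^{2m}2^{-m\alpha}\approx L^{2-\alpha}$, so at $L=n^{1/\alpha}$ the total is $\approx n^{2/\alpha}$: \emph{all} scales up to the critical one contribute at the critical order, the small jumps are not a discardable bulk, and it is precisely the characteristic-function computation that resums them into the $\aS(\sigma)$ Lévy exponent $-\sigma^\alpha|\theta|^\alpha$ once $C$ is tuned correctly.

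To produce a \emph{single} $f$ valid for all $n\to\infty$ I would run this over a nested sequence of Rokhlin towers of rapidly increasing heights $h_1\ll h_2\ll\cdots$, each refining the previous, installing in the $i$-th tower all scales $m$ with $2^{m\alpha}\lesssim h_i$; successive stages alter $f$ only on sets of summably small measure, so the partial constructions converge $m$-a.e.\ to a limit $f$. For a fixed $n$, choose the first tower with $h_i\gg n$: within a window of length $n<h_i$, starting from a uniformly random level, the orbit reads off the prescribed product string, so $(f\circ T^k)_{k<n}$ is exactly the prescribed independent symmetric heavy-tailed array off a set of measure $\lesssim n/h_i+\e_i$. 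Jumps of size $\gg n^{1/\alpha}$ occur at total rate $o(1/n)$ and are truncated away; the remaining array, being genuinely independent off a negligible set, yields $n^{-1/\alpha}S_n(f)\Rightarrow\aS(\sigma)$ by the classical symmetric stable limit theorem, with centering $a_n=0$ guaranteed by symmetry.

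The main obstacle is exactly this uniform-in-$n$ control: the approximation must hold for \emph{every} $n$, not merely along $n=h_i$. This requires estimating the joint effect of (a) the Rokhlin error sets accumulated over all stages, (b) the boundary set of measure $\approx n/h_i$ on which an orbit runs off the top of the governing tower into the next level-structure, and (c) the residual correlations that determinism forces upon the markers, and proving that after normalization by $n^{1/\alpha}$ none of these disturbs the characteristic-function limit uniformly in $n$. The delicate regime is $\alpha$ near $2$, where the tail is lightest and the smaller jumps contribute most at the critical scale, so the tower heights $h_i$ and the truncation level $n^{1/\alpha}$ must be balanced sharply enough that the truncated-variance and tail-sum estimates close. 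Showing that the installed point process of signs behaves, in the characteristic-function limit, like an independent symmetric array despite these residual correlations is what makes the symmetric $\alpha$-stable target both natural and attainable, and is where the bulk of the technical work will lie.
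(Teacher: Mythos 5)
Your overall strategy --- embed a symmetric, heavy-tailed, approximately i.i.d.\ triangular array of dyadic jump-layers via Rokhlin towers and compare $n^{-1/\alpha}S_n(f)$ with the classical symmetric stable limit theorem --- has the right flavor, and your heuristics (the critical scale $2^m\approx n^{1/\alpha}$, the observation that \emph{all} scales up to the critical one contribute at order $n^{1/\alpha}$, symmetry killing the centering) match what actually happens. But there is a genuine gap exactly where you locate the ``main obstacle,'' and it is not a technicality that sharper estimates will close. With $f=\sum_m 2^m\xi_m$ a direct sum of layers, the layers at scales $2^{m\alpha}\ll n$ were installed in towers of height $h_j\ll n$, so a window of length $n$ wraps around those towers many times. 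Within one pass the markers are i.i.d.\ by the name-coding, but across passes their joint law is completely uncontrolled, and it cannot be repaired retroactively: a later, taller tower may only alter $f$ on a set of summably small measure, whereas forcing an already-installed layer to be i.i.d.\ over the longer windows would require changing it on a set of substantial measure. Since, as you yourself compute, these sub-critical layers contribute at the critical order $n^{1/\alpha}$ and are ``not a discardable bulk,'' your assertion that ``the remaining array, being genuinely independent off a negligible set'' yields the limit is false for precisely those layers, and the characteristic-function comparison does not go through.

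The paper's construction contains the device that closes this gap and that your proposal lacks: the layer at level $k$ is installed not as a function $f_k$ but as the difference $f_k-f_k\circ T^{d_k}$ with $d_k=\bigl[2^{2\alpha k/(2-\alpha)}\bigr]$ matched to the scale, and the exactly-i.i.d.\ block produced by the embedding proposition has length $2d_k$. For $n\le d_k$ (the critical and super-critical layers, i.e.\ $k>(\tfrac1\alpha-\tfrac12)\log n$) the $2n$ values appearing in $S_n(f_k)-S_n(f_k)\circ T^{d_k}$ all lie inside a single exactly-i.i.d.\ block, so no approximate-independence argument is needed there at all; for $n\ge d_k$ (the sub-critical layers) the sum telescopes, $S_n(f_k)-S_n(f_k)\circ T^{d_k}=S_{d_k}(f_k)-S_{d_k}(f_k)\circ T^{n}$, i.e.\ it collapses to two boundary terms, each involving only $d_k$ consecutive (hence i.i.d.) values, and a truncated-variance plus Borel--Cantelli estimate shows both are $o(n^{1/\alpha})$ in probability. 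Thus the uncontrolled ``multiple passes through a short tower'' regime never arises. Without this coboundary structure (or an equivalent mechanism for neutralizing the sub-critical layers), the uniform-in-$n$ control you need cannot be established, so the proposal as written does not yield a proof.
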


We remark that a considerable part of the statement is that the scaling is of the form $n^{1/\alpha}$. One reason for interest in this scaling is that if a stationary process satisfies a WIP with a non-degenerate $S\alpha S$ L\'evy motion as a limit then $b_n$ must be $1/\alpha$ regularly varying. Furthermore, by Fact \ref{Fact: Lesigne}, when $\alpha\in (0,1)$, this scaling is the largest possible growth rate of the dispersion parameter for the sum process of a stationary $\aS$ process.  

\subsection{Organisation of the paper} 

In Section \ref{sec: Stable} we introduce a carefully chosen triangular array and use a Proposition 2 from \cite{MR4374685} to embed it in a given aperiodic, ergodic probability preserving system. We then construct, using the functions from the embedding, the function which satisfies the $\alpha$-stable CLT. 

Section \ref{sec: proofs} is concerned with the proof of the CLT for the function from Section \ref{sec: Stable}. The last section is a short appendix containing some standard properties of $\aS$ random variables which are used in Section \ref{sec: proofs}. 




\subsubsection{Notations}
In what follows we will write for $f,g$ two positive valued functions (or sequences), $f(t)\sim g(t)$ if $\lim_{t\to\infty}\frac{f(t)}{g(t)}=1$. We will denote by $f(t)\lesssim g(t)$ if there exists $C>0$ such that $f(t)\leq Cg(t)$ for all large $t$ and $f\approx g$ if $f(t)\lesssim g(t)$ and $g(t)\lesssim f(t)$. 

In addition when $f$ and $g$ are real valued functions with $2\leq f(t)\leq g(t)$, we write $\sum_{k=f(t)}^{g(t)}a_k$ for the sum $\sum_{k=[f(t)]}^{[g(t)]}a_k$ where $[x]$ is the floor function of $x$. 

Given a sequence $(Y_n)_{n=1}^\infty$ of random variables and a random variable $Y$, $Y_n\Rightarrow^d Y$ denotes \textit{$Y_n$ converges in distribution to $Y$}, $X=^d Z$ means \textit{$X$ and $Y$ are equally distributed} and $Y\sim^d \aS(\sigma)$ means \textit{$Y$ is distributed  $\aS(\sigma)$}.

For a sequence of random variables $Y(1),Y(2),\ldots$ and $n\in \N$, we write  $S_n(Y)=\sum_{j=1}^nY(j)$. 



\section{Stable laws and a CLT for a target process}\label{sec: Stable}
\subsection{Target triangular array}\label{sub:target triangular array}
The first step is to describe a triangular array, consisting of finite valued random variables, which we will be able to embed in subsection \ref{sub: embedding} in every aperiodic, ergodic, probability preserving system.

Let $d_k:=\left[2^{\frac{2\alpha k}{2-\alpha}}\right]$. 

Consider the following triangular array of random variables: 
\begin{itemize}
	\item[(a)] For each $k\in\N$, $\left\{X_k(i):\ 0\leq i\leq 2d_k\right\}$ are i.i.d, $S\alpha S \left(k^{-1/\alpha}\right)$ random variables. 
	\item[(b)]   For each $k\in\N$, $\left\{X_k(i):\ 0\leq i\leq 2d_k\right\}$ is independent of\\ $\left\{X_j(i):\ 1\leq  j<k,\ 0\leq i\leq 2d_j \right\}$.
\end{itemize}

Let $(\Omega,\mathcal{F},\P)$ be a standard probability space on which all these random variables are defined. 

We now define a sequence of finite valued random variables as follows; First set 
\[
Y_k(j):=X_k(j)1_{\left[2^k\leq \left|X_k(j)\right|\leq 2^{k^2}\right] }. 
\]
Now let $\big(t_l^{(k)}\big)_{l=0}^{L_{k}}\subset \left[2^k,2^{k^2}\right]$, satisfying:
\begin{itemize}
	\item  $t_0^{(k)}=2^k, t_{L_k}^{(k)}=2^{k^2}$. Here $L_k+1\in\mathbb{N}$ is the number of points in the partition.
	\item For all $0\leq l< L_k$, $0<t_{l+1}^{(k)}-t_l^{(k)}\leq \frac{1}{d_k}$. 
\end{itemize}
Now for all $1\leq j\leq 2d_k$, let
\[
Z_{k}(j)=\begin{cases}
	\mathrm{sign}\left(Y_k(j)\right)t_l^{(k)},& \exists 0\leq l< L_{k},\ \ t_l^{(k)}\leq |Y_k(j)|<t_{l+1}^{(k)},\\
	0, & \left|Y_k(j)\right|\notin \left[2^k,2^{k^2}\right].
\end{cases}
\]
The following claim follows easily from the definition. 
\begin{fact}\label{fact: triangular}
The sequence $\left(Z_k(j)\right)_{\{k\in\N,1\leq j\leq 2 d_k \}}$ is a triangular array of random variables so that for every $k\in\N$, $\left(Z_{k}(j)\right)_{j=1}^{2 d_k}$ are finite-valued, i.i.d. random variables. 
\end{fact}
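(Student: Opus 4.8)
The plan is to recognize each $Z_k(j)$ as the image of the single variable $X_k(j)$ under one fixed Borel map, and then to invoke the elementary principle that a common measurable transformation of an i.i.d. family yields an i.i.d. family; all three assertions (finite-valued, identically distributed, independent) will follow from this one observation.

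First I would unwind the two-step recipe $X_k(j) \mapsto Y_k(j) \mapsto Z_k(j)$ into a single function. For fixed $k$, define $\phi_k : \R \to \R$ by $\phi_k(x) = \mathrm{sign}(x)\, t_l^{(k)}$ whenever $t_l^{(k)} \le |x| < t_{l+1}^{(k)}$ for some $0 \le l < L_k$, and $\phi_k(x) = 0$ otherwise. Because $Y_k(j)$ equals $X_k(j)$ on the event $\{2^k \le |X_k(j)| \le 2^{k^2}\}$ and vanishes off it, and because the intervals $\big[t_l^{(k)}, t_{l+1}^{(k)}\big)$ tile $[2^k, 2^{k^2})$, the case definition of $Z_k(j)$ reads off as $Z_k(j) = \phi_k(X_k(j))$ for every $1 \le j \le 2d_k$. (The lone endpoint $|x| = 2^{k^2}$, where the displayed definition is ambiguous, is a $\P$-null event for the continuous law of $X_k(j)$ and is harmlessly absorbed into the ``otherwise'' branch.) The map $\phi_k$ is Borel, being a finite combination of indicators of the Borel sets determined by the sign function and the partition points $t_l^{(k)}$.

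With this representation the three claims are immediate, and this is the bulk of what I would write. Finite-valued: the range of $\phi_k$ is contained in $\{0\} \cup \{\pm t_l^{(k)} : 0 \le l < L_k\}$, a finite set since $L_k + 1 \in \N$. Identically distributed: by (a) the $X_k(j)$ share a law, hence so do their pushforwards $\phi_k(X_k(j))$. Independent: each $Z_k(j)$ is $\sigma(X_k(j))$-measurable, the $\sigma$-algebras $\sigma(X_k(1)), \dots, \sigma(X_k(2d_k))$ are independent since the $X_k(j)$ are, and independence is inherited by measurable functions of independent variables.

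I expect no real obstacle: the statement is precisely the assertion that, within each row $k$, the $Z_k(j)$ arise coordinatewise from an i.i.d. sequence through a fixed truncation-and-discretization map. The only points worth a sentence are that $Z_k(j)$ depends on $X_k(j)$ alone---transparent from the formulas---and that the finiteness of the partition $\big(t_l^{(k)}\big)_{l=0}^{L_k}$ is what forces finitely many output values.
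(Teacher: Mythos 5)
Your proof is correct and matches the paper's intent exactly: the paper offers no proof, stating only that the fact ``follows easily from the definition,'' and your argument --- writing $Z_k(j)=\phi_k(X_k(j))$ for a fixed Borel map $\phi_k$ with finite range and invoking that measurable functions of i.i.d.\ variables are i.i.d.\ --- is precisely the intended justification. Your observation that the endpoint $|Y_k(j)|=2^{k^2}$ falls through both cases of the displayed definition but is a null event is a fair (and harmless) point of care beyond what the paper records.
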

We summarise several key properties of the sequences defined above which will be used in the sequel.
\begin{lemma}\label{lem: facts on Z,X,Y}For every $k\in\N$, $1\leq j\leq 2d_k$:
	\begin{itemize}
		\item[(a)]  $\left|Z_k(j)-Y_k(j)\right|\leq \frac{1}{d_k}$.
		\item[(b)] $\mathbb{P}(Z_k(j)\neq 0)\leq \mathbb{P}\left(|X_k(j)|\geq 2^k\right)\leq\frac{C_\alpha}{k}2^{-\alpha k}$.  Here $C_\alpha$ is a global constant independent of $k$ and $j$. 
\end{itemize}
\end{lemma}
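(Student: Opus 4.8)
The plan is to verify both bounds directly from the definitions, the only genuinely analytic input being the standard tail asymptotics of symmetric $\alpha$-stable laws, which the paper collects in its appendix.

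For part (a) I would split along the two cases in the definition of $Z_k(j)$. If $|X_k(j)|\notin[2^k,2^{k^2}]$ then $Y_k(j)=0$, so $|Y_k(j)|=0\notin[2^k,2^{k^2}]$ forces $Z_k(j)=0$ and the difference vanishes. Otherwise $Y_k(j)=X_k(j)$, and outside the null event $\{|X_k(j)|=2^{k^2}\}$ (discardable since $X_k(j)$ has a density) there is a unique index $l$ with $t_l^{(k)}\le |Y_k(j)|<t_{l+1}^{(k)}$. By construction $Z_k(j)=\mathrm{sign}(Y_k(j))\,t_l^{(k)}$ shares the sign of $Y_k(j)$, so $|Z_k(j)-Y_k(j)|=|Y_k(j)|-t_l^{(k)}\le t_{l+1}^{(k)}-t_l^{(k)}\le 1/d_k$ by the spacing condition imposed on the partition. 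This is exactly the bound in (a).

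For part (b) the first inequality is an inclusion of events: $Z_k(j)\neq 0$ forces $|Y_k(j)|\in[2^k,2^{k^2}]$ and hence $|X_k(j)|\ge 2^k$, whence $\P(Z_k(j)\neq 0)\le \P(|X_k(j)|\ge 2^k)$. For the second inequality I would exploit the scaling property of symmetric stable laws: since $X_k(j)\sim^d \aS(k^{-1/\alpha})$, we have $X_k(j)\eqd k^{-1/\alpha}W$ with $W\sim^d \aS(1)$, so that $\P(|X_k(j)|\ge 2^k)=\P(|W|\ge k^{1/\alpha}2^k)$. Invoking the standard upper tail estimate $\P(|W|\ge \lambda)\le C_\alpha\lambda^{-\alpha}$, valid for all large $\lambda$ (a property of $\aS(1)$ recorded in the appendix), and using the exact identity $(k^{1/\alpha}2^k)^{-\alpha}=k^{-1}2^{-\alpha k}$, gives $\P(|X_k(j)|\ge 2^k)\le \frac{C_\alpha}{k}2^{-\alpha k}$ with $C_\alpha$ depending only on $\alpha$.

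I do not expect a serious obstacle: the lemma is essentially bookkeeping once the definitions are unwound. The only point requiring care is the tail bound, where one must use a genuinely uniform (not merely asymptotic) upper estimate for the $\aS$ tail, so that the inequality is valid for every $k$ rather than only eventually; this is precisely the content of the appendix facts on $\aS$ laws. The mechanism worth highlighting is the exact cancellation between the dispersion $k^{-1/\alpha}$ and the power $-\alpha$ in the tail, which is what manufactures the extra decaying factor $1/k$ in the final bound.
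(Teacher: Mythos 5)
Your proof is correct and follows essentially the same route as the paper: parts (a) and the first inequality in (b) are unwound directly from the definitions of $Y_k$ and $Z_k$ (the paper simply declares them immediate), and the second inequality in (b) is exactly the paper's Proposition on $\aS$ tails, whose proof is the same scaling argument $X_k(j)\eqd k^{-1/\alpha}W$ combined with the uniform bound $\P(|W|\ge\lambda)\le C_\alpha\lambda^{-\alpha}$ for $\lambda\ge 1$. Your added care about the boundary event $\{|X_k(j)|=2^{k^2}\}$ and the insistence on a uniform (not merely asymptotic) tail bound are both appropriate and consistent with what the paper does.
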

\begin{proof}
Part (a) and the first inequality in part (b) are immediate consequences of the definitions of $Y_k$ and $Z_k$ as functions of $X_k$. By Proposition \ref{prop: tails}, as $X_k(j)\sim^d \aS \big(\sqrt[\alpha]{1/k}\big)$ , there exists $C_\alpha$ (which is independent of $k$ and $j$) such that
\[
\mathbb{P}\left(|X_k(j)|\geq 2^k\right)\leq\frac{C_\alpha}{k}2^{-\alpha k}.
\]
\end{proof}
\subsection{Embedding the array in the dynamical system}\label{sub: embedding}
Let $(\mathcal{X},\B,m)$ be a standard probability space. A finite partition of $\mathcal{X}$ is \textbf{measurable} if all of its pieces (atoms) are Borel-measurable. 
Recall that a finite sequence of random variables $X_1,\ldots,X_n:\mathcal{X}\to \R$, each taking finitely many of values, is \textbf{independent of a finite partition} $\mathcal{P}=(P)_{P\in \mathcal{P}}$ if for all $s\in \R^n$ and $P\in\mathcal{P}$,
\[
m\left(\left(X_j\right)_{j=1}^n=s|P\right)=m\left(\left(X_j\right)_{j=1}^n=s\right).
\]
We will embed the triangular array using the following key proposition. 
\begin{proposition}\cite[Proposition 2]{MR4374685}\label{prop:KV20} Let $(\mathcal{X},\B,m,T)$ be an aperiodic, ergodic, probability preserving transformation and $\mathcal{P}$ a finite-measurable partition of $\mathcal{X}$. For every finite set $A$ and $U_1,U_2,\ldots,U_n$ an i.i.d. sequence of $A$ valued random variables, there exists $f:\mathcal{X}\to A$ such that $(f\circ T^j)_{j=0}^{n-1}$ is distributed as $(U_j)_{j=1}^n$ and $(f\circ T^j)_{j=0}^{n-1}$ is independent of $\mathcal{P}$. 
\end{proposition}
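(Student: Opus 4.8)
The plan is to reduce the statement to a single distributional requirement and then realise it by painting a tall tower. Writing $p$ for the common law of the $U_j$ on $A$, the conclusion is equivalent to asking that the single $A^n$-valued observable $\Phi(x):=(f(x),f(Tx),\dots,f(T^{n-1}x))$ have law $p^{\otimes n}$ and be independent of $\mathcal P$. I stress that we are \emph{not} asking the bi-infinite process $(f\circ T^j)_{j\in\Z}$ to be i.i.d.\ --- that would force positive entropy and fail for, e.g., rotations --- but only that a window of the \emph{fixed finite} length $n$ look i.i.d.\ and be independent of $\mathcal P$. This is exactly the kind of finite constraint one can meet on a Rokhlin tower, the point being that overlapping windows $\Phi(x)$ and $\Phi(Tx)$ are allowed to be dependent.

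First I would use aperiodicity to produce, for a large height $N\gg n$, a Rokhlin tower with base $B$, disjoint levels $B,TB,\dots,T^{N-1}B$, and error set of measure $<\varepsilon$. I would refine $B$ by the $\mathcal P$-name $(\mathcal P(x),\mathcal P(Tx),\dots,\mathcal P(T^{N-1}x))$ so that $\mathcal P$ is constant along each column, and then subdivide each column base further into pieces indexed by words $w\in A^N$, giving the piece carrying $w$ conditional measure $p^{\otimes N}(w)=\prod_j p(w_j)$, with this word assignment performed identically on every column (hence independently of the $\mathcal P$-name). Setting $f=w_\ell$ at level $\ell$ of the column labelled $w$, a point at an interior level $\ell\le N-n$ has $\Phi=(w_\ell,\dots,w_{\ell+n-1})$; since the coordinates of $w$ are i.i.d.\ $p$ and the word is independent of the column, this window is \emph{exactly} $p^{\otimes n}$-distributed and \emph{exactly} independent of $\mathcal P$. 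Thus the construction is already exact on the tower interior, with the only defect concentrated on the top $n-1$ levels together with the error set, a set of measure $O(n/N+\varepsilon)$; this already delivers the approximate version.

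The main obstacle is upgrading this to the exact equality of laws and exact independence in the statement, and the whole difficulty lives at the two places where the clean interior argument fails: the Rokhlin error set, and the windows that begin in the top $n-1$ levels and spill over into the next column. I would remove the first defect by replacing the plain Rokhlin lemma with an exact tiling, either Alpern's two-height castle with the coprime heights $N,N+1$ or a Kakutani return-time skyscraper over $B$, so that the space is covered with no error set. For the second defect, note that a straddling window reads the suffix of one column's word followed by the prefix of the word of the column immediately above it --- never three columns, since every height is $\ge N\gg n$ --- so it is enough to arrange that the word of a column be independent of the word of the column directly above it, while keeping the marginal $p^{\otimes \mathrm{height}}$ and the independence from the $\mathcal P$-names. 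This is only a one-step (finite-range) independence condition, not independence of the whole sequence of column words, so I expect it \emph{not} to be obstructed by zero entropy; the idea would be to choose the word partition of the base as a suitable independent complement of the $\sigma$-algebra generated by the return-time and $\mathcal P$ data together with its one-step image under the return map. Once consecutive words are independent with the product marginal, every window, interior or straddling, is exactly $p^{\otimes n}$ and exactly independent of $\mathcal P$, and because the tiling leaves no exceptional set, $\Phi$ has the prescribed law. Measurability of $f$ is automatic from that of all the partitions involved.
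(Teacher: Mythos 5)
This paper does not actually prove Proposition~\ref{prop:KV20}: it is imported verbatim from \cite[Proposition 2]{MR4374685}, so there is no in-paper argument to compare yours against. Judging your proposal on its own terms: the tower-painting part is correct and gives the \emph{approximate} statement cleanly (interior windows are exactly $p^{\otimes n}$ and independent of $\mathcal P$, with a defect of measure $O(n/N+\varepsilon)$), and your framing remarks (only a fixed finite window is required to look i.i.d., overlapping windows may be dependent, no entropy obstruction) are all accurate.

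The genuine gap is at the one step that carries the whole content of the proposition, namely the upgrade to \emph{exact} equality of laws. After passing to a Kakutani skyscraper or Alpern castle you correctly reduce everything to the claim that the word partition $Q$ of the base $B$ can be chosen so that $Q$ and $T_B^{-1}Q$ are independent, with the prescribed product marginals conditioned on the return times and jointly independent of the $\mathcal P$-name data. Your proposed justification --- take $Q$ to be an independent complement of ``the $\sigma$-algebra generated by the return-time and $\mathcal P$ data together with its one-step image under the return map'' --- is circular: the one-step image $T_B^{-1}Q$ is determined by the very partition $Q$ being constructed, so it is not a pre-existing $\sigma$-algebra of which $Q$ can be chosen as an independent complement. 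What you actually need is a partition independent of \emph{its own} image under $T_B$, with prescribed conditional distributions relative to the (generally dependent) pair of return times $(R_B, R_B\circ T_B)$ and the $\mathcal P$-names; that self-referential independence statement is essentially the $n=2$ case of the proposition itself, in a harder relative, countable-state form for the induced transformation, so the reduction does not close. You are right that such one-step independence is not forbidden by zero entropy (it does not force $h(T_B,Q)>0$), but ``not obstructed'' is not a construction: this is precisely the step any complete proof must supply, and it is the reason the present paper defers to the proof in \cite{MR4374685} rather than sketching a tower argument.
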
 	
An easy corollary of this proposition and Fact \ref{fact: triangular} is the following. 
\begin{corollary}\label{cor: embed}
 Let $(\mathcal{X},\B,m,T)$ be an aperiodic, ergodic, probability preserving transformation and $\left(Z_k(j)\right)_{\{k\in\N,1\leq j\leq 2 d_k \}}$ be the triangular array from subsection \ref{sub:target triangular array}. There exist functions $f_k:\mathcal{X}\to\mathbb{R}$ such that $\left(f_k\circ T^{j-1}\right)_{\{k\in\N,1\leq j\leq 2d_k \}}$ is distributed as $\left(Z_k(j)\right)_{\{k\in\N,1\leq j\leq 2d_k\}}$.
\end{corollary}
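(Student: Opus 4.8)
The plan is to construct the functions $f_k$ inductively in $k$, building one ``row'' of the array at a time and using the partition-independence clause of Proposition \ref{prop:KV20} to force the different rows to be jointly independent. The point is that the hypotheses of Proposition \ref{prop:KV20} are met one row at a time: by Fact \ref{fact: triangular}, for each fixed $k$ the row $(Z_k(j))_{j=1}^{2d_k}$ is finite-valued and i.i.d., so taking $A$ to be its finite range, $n=2d_k$, and $U_j=Z_k(j)$ puts us exactly in the setting of the Proposition, whose output $(f_k\circ T^{j-1})_{j=1}^{2d_k}$ (recall $f_k\circ T^{j-1}=f_k\circ T^{i}$ with $i=j-1$ ranging over $0,\dots,2d_k-1$) reproduces the within-row law.

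Before running the induction I would record the structural features of the target array that the construction has to reproduce. Each row is i.i.d.\ (Fact \ref{fact: triangular}), and, crucially, distinct rows are independent: since $Z_k(j)$ is a deterministic function of $X_k(j)$, property (b) of the array shows that the row indexed by $k$ is independent of the $\sigma$-algebra generated by all earlier rows $(Z_{k'}(j))_{k'<k,\ 1\le j\le 2d_{k'}}$. The induction then proceeds as follows. For $k=1$, apply Proposition \ref{prop:KV20} with the trivial partition $\{\mathcal{X}\}$ to obtain $f_1$ with $(f_1\circ T^{j-1})_{j=1}^{2d_1}$ distributed as $(Z_1(j))_{j=1}^{2d_1}$. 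For the step, suppose $f_1,\dots,f_{k-1}$ have been constructed so that $(f_{k'}\circ T^{j-1})_{k'<k,\ 1\le j\le 2d_{k'}}$ has the same joint law as the corresponding block of $Z$'s. These finitely many finite-valued random variables generate a finite measurable partition $\mathcal{P}_{k-1}$ of $\mathcal{X}$; applying Proposition \ref{prop:KV20} with this partition, $A$ the range of $Z_k(1)$, and $U_j=Z_k(j)$ yields $f_k$ such that $(f_k\circ T^{j-1})_{j=1}^{2d_k}$ is distributed as $(Z_k(j))_{j=1}^{2d_k}$ and is independent of $\mathcal{P}_{k-1}$.

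The independence-from-$\mathcal{P}_{k-1}$ clause is precisely what upgrades the match from marginals to the full joint law: on the constructed side the new row has the correct within-row distribution and is independent of $\sigma(\mathcal{P}_{k-1})=\sigma(f_{k'}\circ T^{j-1}:k'<k)$, while on the target side the new row $(Z_k(j))_j$ has the same within-row distribution and is independent of the earlier rows; hence the joint law of $(f_{k'}\circ T^{j-1})_{k'\le k}$ agrees with that of $(Z_{k'}(j))_{k'\le k}$, closing the induction. Since a law on the relevant countable index set is determined by its finite-dimensional marginals, and every finite set of indices lies in the first $k$ rows for some $k$, the entire array $(f_k\circ T^{j-1})$ has the same distribution as $(Z_k(j))$. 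The only real work is this bookkeeping — verifying that ``correct row law plus independence from the partition generated by the previous rows'' propagates to joint agreement for all finitely many coordinates at once, and that matching all finite-dimensional distributions suffices; the genuine content all sits inside Proposition \ref{prop:KV20}, which is quoted.
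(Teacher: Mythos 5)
Your proposal is correct and follows essentially the same route as the paper: an induction on the row index $k$, invoking Proposition \ref{prop:KV20} with the finite partition generated by the previously constructed rows, and using the independence-from-the-partition clause together with property (b) of the array to upgrade the within-row law to agreement of the full joint law. The extra remarks you add (that $Z_k(j)$ is a function of $X_k(j)$ so rows of the target array are mutually independent, and that finite-dimensional marginals determine the law of the countable array) are implicit in the paper's argument and correctly filled in.
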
	
\begin{proof}
Starting with $\mathcal{P}=\{\mathcal{X}\}$, the trivial partition, and applying Proposition \ref{prop:KV20} we find $f_1:\mathcal{X} \to \R$ such that  $f_1,f_1\circ T,\dots,f\circ T^{2d_1-1}$ are i.i.d. distributed as the finite-valued random variable $Z_1(1)$.

 In the inductive step we are given  $f_k:\mathcal{X}\to\R$, $1\leq k\leq m$,  such that the array $\left\{f_k\circ T^{j-1}:  1\leq k\leq m,\ 1\leq j\leq 2d_k\right\}$ is distributed as
 $\left\{Z_k(j):1\leq k\leq m,\  1\leq j\leq 2 d_k\right\}$. 

Let $\mathcal{P}_m$ be the finite  partition of $\mathcal{X}$ according to the values of the (finite valued) random vector $V:=\left(f_k\circ T^{j-1}:  1\leq k\leq m,\ 1\leq j\leq 2d_k\right)$. 

Apply Proposition \ref{prop:KV20} and obtain a function $f_{m+1}:\mathcal{X}\to \R$ such that $\left\{f_{m+1}\circ T^{j-1}:\ 1\leq j\leq 2d_{m+1}\right\}$ is an i.i.d. sequence distributed as $\left\{Z_{m+1}(j):\ 1\leq j\leq 2d_{m+1}\right\}$ and independent of $\mathcal{P}_m$. Since being independent of $\mathcal{P}_m$ is equivalent to being independent of $V$, we see that 
 that the array $\left\{f_k\circ T^{j-1}:  1\leq k\leq m+1,\ 1\leq j\leq 2d_{k}\right\}$ is distributed as
$\left\{Z_k(j):1\leq k\leq m+1,\  1\leq j\leq 2d_k\right\}$. 
\end{proof}	
\subsection{Definition of the function} Let $(\mathcal{X},\B,m,T)$ be an aperiodic, ergodic, probability preserving system and $(f_k)_{k=1}^\infty$ the functions from Corollary \ref{cor: embed}.
\begin{lemma}
For $m$ almost every $\omega\in\mathcal{X}$, there exists $K(\omega)\in\N$ such that for all $k>K(\omega)$, $f_k(\omega)-f_k\circ T^{d_k}(\omega)=0$.   
\end{lemma}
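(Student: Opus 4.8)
The plan is to read the statement as a first Borel--Cantelli conclusion applied to the events
\[
A_k := \left\{\omega\in\mathcal{X}:\ f_k(\omega)\neq f_k\circ T^{d_k}(\omega)\right\}.
\]
Indeed, if I can establish $\sum_{k} m(A_k)<\infty$, then $m(\limsup_k A_k)=0$, so for $m$-a.e.\ $\omega$ only finitely many of the $A_k$ contain $\omega$; the largest such index (plus one) serves as $K(\omega)$, and for $k>K(\omega)$ we get $f_k(\omega)-f_k\circ T^{d_k}(\omega)=0$, which is exactly the assertion.

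\textbf{Reducing to the target array.} First I would transport the event to the triangular array. By Corollary~\ref{cor: embed}, the collection $\left(f_k\circ T^{j-1}\right)_{\{k\in\N,\,1\leq j\leq 2d_k\}}$ has the same joint distribution as $\left(Z_k(j)\right)_{\{k\in\N,\,1\leq j\leq 2d_k\}}$. In particular the pair $(f_k,\,f_k\circ T^{d_k})$ is distributed as $(Z_k(1),\,Z_k(d_k+1))$; the second index is admissible since $d_k+1\leq 2d_k$ (as $d_k\geq 1$ for every $k$, because $d_k=\big[2^{2\alpha k/(2-\alpha)}\big]$ with $\alpha\in(0,2)$). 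Therefore $m(A_k)=\P\big(Z_k(1)\neq Z_k(d_k+1)\big)$.

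\textbf{The key tail estimate.} The two random variables $Z_k(1)$ and $Z_k(d_k+1)$ certainly agree whenever both vanish, so
\[
\{Z_k(1)\neq Z_k(d_k+1)\}\subseteq \{Z_k(1)\neq 0\}\cup\{Z_k(d_k+1)\neq 0\}.
\]
Applying Lemma~\ref{lem: facts on Z,X,Y}(b) to each term on the right yields $m(A_k)\leq \frac{2C_\alpha}{k}2^{-\alpha k}$. Since $2^{-\alpha k}$ decays geometrically, the series $\sum_k \frac{2C_\alpha}{k}2^{-\alpha k}$ converges, supplying the summability hypothesis for Borel--Cantelli and completing the argument.

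\textbf{Expected difficulty.} There is no genuine obstacle here: the lemma is an immediate consequence of the geometric tail bound in Lemma~\ref{lem: facts on Z,X,Y}(b) combined with the distributional embedding. The only point demanding care is the index bookkeeping---verifying that $f_k$ corresponds to $Z_k(1)$ and $f_k\circ T^{d_k}$ to $Z_k(d_k+1)$, with both indices lying in the admissible range $1\leq j\leq 2d_k$---so that the joint-law identity of Corollary~\ref{cor: embed} may legitimately be invoked for this particular pair.
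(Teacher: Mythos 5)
Your proposal is correct and follows essentially the same route as the paper: a union bound reducing $\{f_k\neq f_k\circ T^{d_k}\}$ to the events where the individual terms are nonzero, the tail estimate of Lemma \ref{lem: facts on Z,X,Y}(b) transported through the distributional identity of Corollary \ref{cor: embed}, and the Borel--Cantelli lemma. The only cosmetic difference is that the paper bounds $m(f_k\circ T^{d_k}\neq 0)$ by $m(f_k\neq 0)$ using that $T$ preserves $m$, whereas you identify the pair with $(Z_k(1),Z_k(d_k+1))$ and bound each term separately.
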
	
\begin{proof}
By the definition of the functions, we have for all $k\in\N$, 
\[
m\left(f_k\neq 0\right)=\mathbb{P}\left(Z_k(0)\neq 0\right).
\]
By Lemma \ref{lem: facts on Z,X,Y}.(b), there exists $C_\alpha$ such that for all $k$, 
\[
\mathbb{P}\left(Z_k(0)\neq 0\right)\leq \frac{C_\alpha}{k2^{k\alpha}}. 
\]
Consequently, as $T$ is $m$ preserving, 
\begin{align*}
\sum_{k=1}^\infty m\left(f_k-f_k\circ T^{d_k}\neq 0\right)&\leq 2\sum_{k=1}^\infty m\left(f_k\neq 0\right)\\
&\leq 2\sum_{k=1}^\infty \mathbb{P}\left(Z_k(0)\neq 0\right)<\infty. 
\end{align*}
The conclusion follows from the Borel-Cantelli lemma. 
 \end{proof}	
Set
\[
f:=\sum_{k=1}^\infty \left(f_k-f_k\circ T^{d_k}\right).
\]	
This function is well defined as it is almost surely a sum of finitely many values. The following  theorem implies Theorem \ref{thm: CLT}. In what follows,  $\log(x)$ denotes the logarithm of $x$ in base $2$ and $\ln(x)$ is the natural logarithm of $x$. 

\begin{manualtheorem}{1.b}
\label{thm: CLT2}
	$\frac{S_n(f)}{n^{1/\alpha}}\Rightarrow ^dS\alpha S \left(\sigma\right)$ with $\sigma^\alpha=2\ln\left(\frac{2}{2-\alpha}\right)$.
\end{manualtheorem}

\section{Proof of Theorem \ref{thm: CLT2}}\label{sec: proofs}

For a measurable function $g:\mathcal{X}\to\R$ and $k\in \N$, we write $U^kg=g\circ T^k$ . The proof of  Theorem \ref{thm: CLT2} begins by writing  
\begin{equation}\label{eq: decomp}
S_n(f)=S_n^{(\mathbf{S})}(f)+S_n^{(\mathbf{M})}(f)+S_n^{(\mathbf{L})}(f) 
\end{equation}
where 
\begin{align*}
S_n^{(\mathbf{M})}(f)&:= \sum_{k=(\frac{1}{\alpha}-\frac{1}{2})\log(n)+1}^{\frac{1}{\alpha}\log(n)}\left(S_n(f_k)-U^{d_k}S_n(f_k)\right)\\
S_n^{(\mathbf{S})}(f)&:=	\sum_{k=1}^{(\frac{1}{\alpha}-\frac{1}{2})\log(n)}\left(S_n(f_k)-U^{d_k}S_n(f_k)\right)\\
S_n^{(\mathbf{L})}(f)&:=\sum_{k=\frac{1}{\alpha}\log n+1}^\infty \left(S_n(f_k)-U^{d_k}S_n(f_k)\right).			
\end{align*}
Theorem \ref{thm: CLT2} follows from the following proposition,  \eqref{eq: decomp} and the converging together lemma (also known as Slutsky's Theorem). 
\begin{proposition}\label{MainCLT} $ $
	\begin{itemize}
		\item[(a)] $\frac{1}{n^{1/\alpha}}S_n^{(\mathbf{S})}(f)\to 0$ in probability. 
		\item[(b)] $\frac{1}{n^{1/\alpha}}S_n^{(\mathbf{M})}(f)\Rightarrow^d S_\alpha S \left(\sqrt[\alpha]{2\log\left(\frac{2}{2-\alpha}\right)}\right)$.
		\item[(c)] $\lim_{n\to\infty}m\left(S_n^{(\mathbf{L})}(f)\neq 0\right)=0$. 
\end{itemize}\end{proposition}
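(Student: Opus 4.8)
The plan is to establish the three items separately; part (c) is soft, part (a) reduces to part (b)'s structure via a measure-preservation trick, and part (b) carries all of the analytic content. I would dispatch (c) first by a union bound. Since $S_n(f_k)-U^{d_k}S_n(f_k)$ can be nonzero only if $f_k\circ T^i\neq 0$ for one of the at most $2n$ indices $i$ occurring in $S_n(f_k)$ or in $U^{d_k}S_n(f_k)$, stationarity gives $m\bigl(S_n(f_k)-U^{d_k}S_n(f_k)\neq 0\bigr)\le 2n\,m(f_k\neq 0)=2n\,\P(Z_k(0)\neq 0)$. Summing over $k>\frac{1}{\alpha}\log n$ and invoking Lemma~\ref{lem: facts on Z,X,Y}(b), the bound $\sum_{k>\frac1\alpha\log n}2n\frac{C_\alpha}{k}2^{-\alpha k}$ is dominated by its first term; since $2^{-\alpha k}=n^{-1}$ at $k=\frac1\alpha\log n$, this first term is $\approx\frac{2\alpha C_\alpha}{\log n}\to 0$, proving (c).

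For part (a) the key observation is a measure-preservation identity. Writing $S_n^{(\mathbf S)}(f)=A_n-B_n$ with $A_n=\sum_{k\le(\frac1\alpha-\frac12)\log n}\sum_{j=0}^{d_k-1}f_k\circ T^j$ and $B_n=\sum_{k}\sum_{j=n}^{n+d_k-1}f_k\circ T^j$, one has $B_n=A_n\circ T^n$, so $B_n\eqd A_n$ and it suffices to prove $n^{-1/\alpha}A_n\to 0$ in probability (then $n^{-1/\alpha}B_n\to 0$ in distribution, hence in probability, and the difference too). Each inner block $\sum_{j=0}^{d_k-1}f_k\circ T^j$ lies inside the i.i.d.\ window $[0,2d_k-1]$, so $A_n$ is a sum, independent across $k$, of $d_k$ i.i.d.\ symmetric copies of $Z_k$. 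A union bound in the spirit of (c) shows that with probability tending to one no summand exceeds $n^{1/\alpha}$ in modulus, so I may replace each $Z_k$ by its truncation $\tilde Z_k=Z_k\mathbf 1_{|Z_k|\le n^{1/\alpha}}$. The truncated variables are centered by symmetry, and a direct computation of $\var(\tilde A_n)=\sum_k d_k\var(\tilde Z_k)$ with $\var(\tilde Z_k)\approx\frac{C}{k}\int_{2^k}^{n^{1/\alpha}}x^{1-\alpha}\,dx$, after inserting $d_k\approx 2^{2\alpha k/(2-\alpha)}$ and noting the sum is dominated by $k\approx(\frac1\alpha-\frac12)\log n$, gives $\var(\tilde A_n)=o(n^{2/\alpha})$; Chebyshev finishes (a).

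The heart of the matter is part (b). Throughout the middle range $\mathbf M=\{(\frac1\alpha-\frac12)\log n<k\le\frac1\alpha\log n\}$ one has $d_k\ge n$, so the two blocks defining $W_{n,k}:=S_n(f_k)-U^{d_k}S_n(f_k)$ are disjoint and both lie in the window $[0,2d_k-1]$; hence $W_{n,k}$ is a difference of two independent sums of $n$ i.i.d.\ copies of the symmetric variable $Z_k$, and the $W_{n,k}$ are independent across $k$. By symmetry $W_{n,k}\eqd\sum_{j=1}^{2n}Z_k(j)$, so with $s=\theta/n^{1/\alpha}$ the characteristic function of $n^{-1/\alpha}S_n^{(\mathbf M)}(f)$ equals $\prod_{k\in\mathbf M}\phi_k(s)^{2n}$, where $\phi_k$ is the real, even characteristic function of $Z_k$. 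I would take logarithms and establish the chain
\[
\sum_{k\in\mathbf M}2n\ln\phi_k(s)\ \approx\ -\sum_{k\in\mathbf M}2n\bigl(1-\phi_k(s)\bigr)\ \approx\ -\sum_{k\in\mathbf M}2n\bigl(1-\phi_{Y_k}(s)\bigr)\ \approx\ -\sum_{k\in\mathbf M}\frac{4C'_\alpha\theta^\alpha}{k}\int_{\theta 2^k/n^{1/\alpha}}^{\infty}\frac{1-\cos u}{u^{1+\alpha}}\,du,
\]
the first step being the uniform-asymptotic-negligibility estimate $\sum_k 2n(1-\phi_k(s))^2=o(1)$, the second the discretization bound $|\phi_{Z_k}(s)-\phi_{Y_k}(s)|\le |s|/d_k$ of Lemma~\ref{lem: facts on Z,X,Y}(a) summed against the factor $2n$ and $\sum_k d_k^{-1}\approx n^{-1}$ to give $O(n^{-1/\alpha})$, and the third the replacement of the $\aS(k^{-1/\alpha})$ density on the annulus $[2^k,2^{k^2}]$ by its tail asymptotic $\frac{C'_\alpha/k}{|x|^{1+\alpha}}$ (via the stable tail estimates of Proposition~\ref{prop: tails}) followed by the substitution $u=sx$, under which all powers of $n$ cancel.

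Finally I would evaluate the limit of the last sum. Writing $\beta=k/\log n$ turns $\sum_{k\in\mathbf M}\frac1k\int_{\theta n^{\beta-1/\alpha}}^{\infty}\frac{1-\cos u}{u^{1+\alpha}}\,du$ into a Riemann sum of mesh $1/\log n$ for $\int_{1/\alpha-1/2}^{1/\alpha}\frac{d\beta}{\beta}$; since the lower endpoint $\theta n^{\beta-1/\alpha}$ tends to $0$ for every $\beta<\frac1\alpha$, dominated convergence replaces the inner integral by $J:=\int_0^\infty\frac{1-\cos u}{u^{1+\alpha}}\,du$ and yields the limit $4C'_\alpha\theta^\alpha J\ln\frac{2}{2-\alpha}$. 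The normalization $2C'_\alpha J=1$, which is exactly the L\'evy--Khintchine identity $|\theta|^\alpha=\int_{\R}(1-\cos\theta x)\frac{C'_\alpha}{|x|^{1+\alpha}}\,dx$ for $\aS(1)$, then gives the limiting log-characteristic function $-2\ln\frac{2}{2-\alpha}\,|\theta|^\alpha$, i.e.\ $\sigma^\alpha=2\ln\frac{2}{2-\alpha}$, as claimed. I expect the main obstacle to be making the three approximations in (b) uniform in $k$ over the whole middle window and bookkeeping the constant exactly, since it is precisely the interplay between the harmonic sum $\sum 1/k\to\ln\frac{2}{2-\alpha}$ over the dyadic scale window and the fixed integral $J$ that produces the prescribed dispersion.
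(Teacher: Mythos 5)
Your decomposition and your treatments of (a) and (c) essentially coincide with the paper's: (c) is the same union bound, and for (a) you use the same identity $S_n^{(\mathbf{S})}(f)=\mathsf{G}_n(f)-U^n\mathsf{G}_n(f)$ (Proposition \ref{prop:small 1}) together with measure preservation; your single truncation of $Z_k$ at level $n^{1/\alpha}$ followed by Chebyshev is a mild repackaging of the paper's chain $\mathbf{Z}\to\mathbf{Y}\to\mathbf{X}$ with the $\widehat{X}/\widetilde{X}$ split and Lemma \ref{lem: variance of truncation}, and it works. The real divergence is in part (b). The paper never expands a characteristic function: it transports the problem back to the genuinely stable variables $X_k$ (Lemmas \ref{lem: middle 1}, \ref{lem: middle 2} and \ref{lem: 1 CLT}, the latter via the split \eqref{eq: V and S} into a Borel--Cantelli part and a variance part), and then Lemma \ref{lem: 2 CLT} is immediate because a sum of independent $S\alpha S$ variables is exactly $S\alpha S$ with additive dispersion, so the constant $2\ln\frac{2}{2-\alpha}$ appears directly as the limit of the harmonic sum $\frac{1}{n}\sum_{k}\frac{2n}{k}$. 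You instead run the classical triangular-array computation on the truncated variables themselves: logarithm of the product of characteristic functions, replacement of the law of $Y_k$ on the annulus $[2^k,2^{k^2}]$ by the power-law tail density, the substitution $u=sx$, and the L\'evy--Khintchine normalization $2C'_\alpha\int_0^\infty\frac{1-\cos u}{u^{1+\alpha}}\,du=1$ to recover the same constant. Your route is self-contained and would apply verbatim to any array in the domain of attraction (not just exactly stable $X_k$'s), at the price of needing uniform density (or, via integration by parts, tail) asymptotics for the stable law beyond the one-sided bound of Proposition \ref{prop: tails}, and of the three uniform-in-$k$ approximations you flag; the paper's route buys exactness of the limit law at each finite $n$ and reduces all the analysis to crude $L^2$ and Borel--Cantelli estimates. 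Both are valid and the constants agree.
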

We first prove the simplest part. 
\begin{proof}[Proof of Propositiion \ref{MainCLT}.(c)]
By Corollary \ref{cor: embed} and Lemma \ref{lem: facts on Z,X,Y}.(b) for every  $k>\frac{1}{\alpha}\log(n)$, 
\[
m\left(f_k\neq 0\right)=\P\left(Z_k(1)\neq 0\right)\leq C_\alpha \frac{2^{-\alpha k}}{k}. 
\]
We have for all $k>\frac{1}{\alpha}\log(n)$,                                                                                                        
\begin{align*}
m\left(S_n(f_k)-U^{d_k}S_n(f_k)\neq 0\right)&\leq m\left(\exists j\in[0,n)\cup [d_k,d_k+n], f_k\circ T^j\neq 0\right)\\
&\leq \sum_{j=0}^{n-1} \left[m\left(f_k\circ T^j\neq 0\right)+m\left(f_k\circ T^{d_k+j}\neq 0\right)\right]\\
&\leq 2n\cdot m\left(f_k\neq 0\right)\leq 2nC_\alpha \frac{2^{-\alpha k}}{k}. 
\end{align*}
Here the third inequality is the union bound. A similar argument using the union bound gives
\begin{align*}
\P\left(S_n^{(\mathbf{L})}(f)\neq 0\right)&\leq \sum_{k=\frac{1}{\alpha}\log(n)+1}^\infty m\left(S_n(f_k)-U^{d_k}S_n(f_k)\neq 0\right)\\
&\leq 2nC_\alpha \sum_{k=\frac{1}{\alpha}\log(n)+1}^\infty \frac{2^{-k\alpha}}{k}\\
&\leq \frac{2nC_\alpha}{\frac{1}{\alpha}\log(n)}\sum_{k=\frac{1}{\alpha}\log(n)+1}^\infty 2^{-k\alpha}\lesssim \frac{1}{\log(n)}\xrightarrow[n\to\infty]{}0. 
\end{align*}
\end{proof}
\subsection{Proving Proposition \ref{MainCLT}.(b)}\label{sub: prop 7.(b)}
For $W\in \{X,Y,Z\}$, write 
\[
	S_n^{(\mathbf{M})}(\mathbf{W})= \sum_{k=(\frac{1}{\alpha}-\frac{1}{2})\log(n)+1}^{\frac{1}{\alpha}\log(n)}\sum_{j=1}^n\left(W_k(j)-W_k(j+d_k)\right) 
\]
Proposition \ref{MainCLT}.(b) follows from the following two Lemmas.
\begin{lemma}\label{lem: middle 1}
For all large $n$, $S_n^{(\mathbf{M})}(f)=^dS_n^{(\mathbf{M})}(\mathbf{Z})$.
\end{lemma}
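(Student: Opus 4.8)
The plan is to exhibit both $S_n^{(\mathbf{M})}(f)$ and $S_n^{(\mathbf{M})}(\mathbf{Z})$ as the image of one and the same deterministic measurable functional, applied respectively to the embedded array $\big(f_k\circ T^{j-1}\big)$ and to the target array $\big(Z_k(j)\big)$. Since Corollary \ref{cor: embed} asserts that these two arrays coincide in distribution over the index set $\{k\in\N,\ 1\le j\le 2d_k\}$, equality in distribution of the two sums will follow at once --- \emph{provided} every index that the functional reads off lies inside that embedded window $1\le j\le 2d_k$. The whole content of the lemma, and the reason for the qualifier ``for all large $n$'', is the verification of this index-range condition.

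First I would rewrite each summand. Reindexing gives, for every level $k$,
\[
S_n(f_k)-U^{d_k}S_n(f_k)=\sum_{j=1}^n\Big(f_k\circ T^{j-1}-f_k\circ T^{d_k+j-1}\Big).
\]
Hence $S_n^{(\mathbf{M})}(f)$ is a fixed function of the variables $f_k\circ T^{j-1}$ with $k$ in the middle band and $j$ ranging over $\{1,\dots,n\}\cup\{d_k+1,\dots,d_k+n\}$, while the identically-defined function of the array $\big(Z_k(j)\big)$ is precisely $S_n^{(\mathbf{M})}(\mathbf{Z})=\sum_{k}\sum_{j=1}^n\big(Z_k(j)-Z_k(d_k+j)\big)$. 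Every index read off stays inside the embedded window as soon as $d_k+n\le 2d_k$, that is $n\le d_k$, for each $k$ in the band $\big[(\tfrac1\alpha-\tfrac12)\log n+1,\ \tfrac1\alpha\log n\big]$.

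To verify $n\le d_k$ throughout the band it suffices, since $d_k=\big[2^{2\alpha k/(2-\alpha)}\big]$ is nondecreasing in $k$, to treat its smallest level. With the floor convention on the summation limits this is $k_{\min}=\big[(\tfrac1\alpha-\tfrac12)\log n\big]+1\ge(\tfrac1\alpha-\tfrac12)\log n$. Using the identity $\tfrac{2\alpha}{2-\alpha}\big(\tfrac1\alpha-\tfrac12\big)=1$, this gives $\tfrac{2\alpha k_{\min}}{2-\alpha}\ge\log n$, hence $2^{2\alpha k_{\min}/(2-\alpha)}\ge n$ and therefore $d_{k_{\min}}\ge n$ (the floor of a real number that is $\ge n$ is itself $\ge n$, since $n\in\N$). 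This confirms $n\le d_k$ on the whole band, so that for all large $n$ the functional reads only variables inside the window, and Corollary \ref{cor: embed} yields $S_n^{(\mathbf{M})}(f)=^d S_n^{(\mathbf{M})}(\mathbf{Z})$.

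The one step I expect to require care is exactly this index bookkeeping at the lower edge of the band. The target array was deliberately built with $2d_k$ rather than $d_k$ entries per level, and the middle band was made to begin just above $(\tfrac1\alpha-\tfrac12)\log n$, precisely so that the shift by $d_k$ inside $f_k-f_k\circ T^{d_k}$ never pushes a read index past $2d_k$. Once that is secured the distributional identity is immediate, requiring no probabilistic estimate.
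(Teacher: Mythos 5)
Your argument is correct and is essentially the paper's own proof: both express $S_n^{(\mathbf{M})}(f)$ and $S_n^{(\mathbf{M})}(\mathbf{Z})$ as the same deterministic functional of the respective arrays, check that $d_k\ge n$ (hence $n+d_k\le 2d_k$) throughout the middle band so every index read lies in the embedded window, and then invoke the distributional identity of Corollary~\ref{cor: embed}. Your explicit verification at the lower edge of the band, via $\tfrac{2\alpha}{2-\alpha}\bigl(\tfrac1\alpha-\tfrac12\bigr)=1$, is exactly the bookkeeping the paper states more tersely.
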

\begin{lemma}\label{lem: middle 2}
$ $
\begin{itemize}
	\item[(a)]  $n^{-1/\alpha}\left|S_n^{(\mathbf{M})}(\mathbf{Z})-S_n^{(\mathbf{M})}(\mathbf{Y})\right|\xrightarrow[n\to\infty]{}0$.
	\item[(b)] $\frac{S_n^{(\mathbf{M})}(\mathbf{Y})}{n^{1/\alpha}}\Rightarrow^d \aS\left(\sqrt[\alpha]{2\ln\left(\frac{2}{2-\alpha}\right)}\right)$
\end{itemize}
\end{lemma}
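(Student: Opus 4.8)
The plan is to handle part (a) by a first-moment estimate and to obtain part (b) from a direct characteristic-function computation. For (a), observe that $Z_k(j)$ and $Y_k(j)$ vanish simultaneously (both equal $0$ exactly when $|X_k(j)|\notin[2^k,2^{k^2}]$) and otherwise differ by at most $1/d_k$, so Lemma \ref{lem: facts on Z,X,Y} gives $\mathbb{E}|Z_k(1)-Y_k(1)|\le \frac1{d_k}\mathbb{P}(Y_k(1)\ne0)\le \frac{C_\alpha}{k\,d_k}2^{-\alpha k}$. Summing the $2n$ identically distributed contributions in each row,
\[
\mathbb{E}\big|S_n^{(\mathbf M)}(\mathbf Z)-S_n^{(\mathbf M)}(\mathbf Y)\big|
\le 2nC_\alpha\!\!\sum_{k=(\frac1\alpha-\frac12)\log n}^{\frac1\alpha\log n}\frac{2^{-\alpha k}}{k\,d_k}.
\]
Since $d_k\approx 2^{2\alpha k/(2-\alpha)}$, the summand decays geometrically and is governed by its smallest index, where $2^{-\alpha k}/d_k\approx n^{-2+\alpha/2}$; after division by $n^{1/\alpha}$ the whole expression is $\lesssim n^{-1-1/\alpha+\alpha/2}/\log n\to0$ for every $\alpha\in(0,2)$. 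Convergence in $L^1$ yields the convergence to $0$ in probability asserted in (a).

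The decisive structural point for (b) is that $d_k\ge n$ throughout the summation window, since $d_k\ge d_{(\frac1\alpha-\frac12)\log n}\approx 2^{\log n}=n$. Hence $S_n(Y_k)=\sum_{j=1}^nY_k(j)$ and $U^{d_k}S_n(Y_k)=\sum_{j=1}^nY_k(j+d_k)$ are sums over the disjoint index blocks $\{1,\dots,n\}$ and $\{d_k+1,\dots,d_k+n\}$, hence are i.i.d.; together with the between-row independence of the array, $S_n^{(\mathbf M)}(\mathbf Y)$ is a genuine sum of row-wise independent symmetric variables. I would therefore compute its characteristic function: with $\psi_k(t)=\mathbb{E}e^{itY_k(1)}$ (real by symmetry and close to $1$) and $s=\theta/n^{1/\alpha}$,
\[
\ln\mathbb{E}\exp\!\Big(i\theta\,\tfrac{S_n^{(\mathbf M)}(\mathbf Y)}{n^{1/\alpha}}\Big)
=\sum_{k}2n\ln\psi_k(s)
=-\sum_{k}2n\big(1-\psi_k(s)\big)+o(1),
\]
where the last equality uses $\ln(1-x)=-x+O(x^2)$ after checking $\sum_k 2n(1-\psi_k(s))^2\to0$, and symmetry gives $1-\psi_k(s)=\mathbb{E}[1-\cos(sY_k(1))]$.

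The analytic core is the evaluation of $\mathbb{E}[1-\cos(sY_k(1))]$. Using the density asymptotic $f_{X_k}(x)\sim\frac\alpha2 C_\alpha k^{-1}|x|^{-\alpha-1}$ on the window (Proposition \ref{prop: tails}) and the substitution $u=sx$, one finds $\mathbb{E}[1-\cos(sY_k(1))]\sim \alpha C_\alpha k^{-1}s^\alpha\,J(s\,2^{k})$, where $J(a)=\int_a^\infty(1-\cos u)u^{-\alpha-1}du$ and the upper cutoff $s\,2^{k^2}\to\infty$ is harmlessly replaced by $+\infty$. Because $s^\alpha=|\theta|^\alpha/n$, the factor $n$ cancels and
\[
-\sum_{k}2n\big(1-\psi_k(s)\big)\sim-2\alpha C_\alpha|\theta|^\alpha\!\!\sum_{k=(\frac1\alpha-\frac12)\log n}^{\frac1\alpha\log n}\frac1k\,J\!\big(s\,2^{k}\big).
\]
Here $s\,2^{k}=\theta\,2^{k-\frac1\alpha\log n}\le\theta$ is small except for $O(1)$ top indices, so $J(s2^k)\to J_\alpha:=\int_0^\infty(1-\cos u)u^{-\alpha-1}du$ on the bulk, the discrepancy summing to $O(1/\log n)$; what survives is the harmonic sum
\[
\sum_{k=(\frac1\alpha-\frac12)\log n}^{\frac1\alpha\log n}\frac1k\longrightarrow\ln\frac{1/\alpha}{1/\alpha-1/2}=\ln\frac{2}{2-\alpha},
\]
which is exactly where the logarithm in the dispersion parameter originates. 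Finally the L\'evy--Khintchine normalization of $\aS(1)$ gives $C_\alpha=1/(\alpha J_\alpha)$, so $2\alpha C_\alpha J_\alpha=2$ and the log-characteristic function converges to $-2\ln(\frac{2}{2-\alpha})\,|\theta|^\alpha$, identifying the limit as $\aS\big(\sqrt[\alpha]{2\ln(2/(2-\alpha))}\big)$.

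The step I expect to be the main obstacle is making these asymptotic replacements rigorous and uniform over a window whose width grows like $\log n$: controlling the error from replacing $X_k$ by its tail density and the finite window $[2^k,2^{k^2}]$ by $[2^k,\infty)$; showing that the quadratic error $\sum_k 2n(1-\psi_k(s))^2$ and the boundary terms near $k=\frac1\alpha\log n$ (where $s\,2^k$ is of order one) are genuinely negligible, not merely small term by term; and justifying the replacement of $\sum_k\frac1k J(s2^k)$ by $J_\alpha\sum_k\frac1k$ with vanishing error. Each is tamed by the geometric behaviour in $k$ against the $1/k\approx\alpha/\log n$ weight, but the bookkeeping is the heart of the matter.
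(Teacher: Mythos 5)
Your argument is correct, but for part (b) it takes a genuinely different route from the paper's. For (a) the two proofs are essentially the same use of Lemma \ref{lem: facts on Z,X,Y}.(a); the paper's version is even simpler than yours, since $|Z_k(j)-Y_k(j)|\leq 1/d_k$ together with $d_k\geq n$ on the window gives the deterministic bound $\big|S_n^{(\mathbf{M})}(\mathbf{Z})-S_n^{(\mathbf{M})}(\mathbf{Y})\big|\leq \frac{1}{\alpha}\log(n)$ pointwise, with no need for the first-moment refinement involving $\P(Y_k(1)\neq 0)$. For (b) the paper does not touch characteristic functions of the truncated variables at all: it first shows $n^{-1/\alpha}\big(S_n^{(\mathbf{M})}(\mathbf{Y})-S_n^{(\mathbf{M})}(\mathbf{X})\big)\to 0$ in probability by splitting $X_k-Y_k$ into the over-threshold part $X_k\1_{[|X_k|\geq 2^{k^2}]}$ (killed by Borel--Cantelli, since $d_k 2^{-\alpha k^2}$ is summable) and the under-threshold part $X_k\1_{[|X_k|\leq 2^k]}$ (killed by the variance bound of Lemma \ref{lem: variance of truncation} and Chebyshev), and then exploits that $S_n^{(\mathbf{M})}(\mathbf{X})$ is \emph{exactly} $\aS(\sigma_n)$ by independence and stability, so that only the harmonic sum $\sum_k \frac{2}{k}\to 2\ln\frac{2}{2-\alpha}$ needs to be computed. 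Your direct L\'evy--Khintchine computation on the $Y_k$'s reaches the same harmonic sum and the same dispersion, and your normalization $C_\alpha=1/(\alpha J_\alpha)$ with $J_\alpha=\int_0^\infty(1-\cos u)u^{-\alpha-1}du$ is indeed the correct tail constant (it is exactly Property 1.2.15 of \cite{MR1280932}, via $\mathbb{E}[1-\cos(\theta X)]=\int_0^\infty\sin(t)\P(|\theta X|>t)\,dt$ and integration by parts), so the constants close up. What your route costs is precisely the bookkeeping you flag: you need the sharp two-sided tail (or density) asymptotic uniformly over a window of width $\frac12\log n$, control of the boundary indices where $s2^k=\Theta(1)$, and the quadratic error in $\ln\psi_k$; all of these do go through because of the geometric behaviour in $k$ against the $1/k$ weights, but the paper's comparison-to-exact-stables argument needs only the one-sided tail bound of Proposition \ref{prop: tails} and a variance estimate, and never requires knowing the value of the tail constant. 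What your route buys is generality: it would prove the stable CLT for the truncated array even if the underlying $X_k$ were merely regularly varying rather than exactly stable.
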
 
\begin{proof}[Proof of Proposition \ref{MainCLT}.(b)]
By Lemma \ref{lem: middle 1}, it suffices to show convergence of $\frac{S_n^{(\mathbf{M})}(\mathbf{Z})}{n^{1/\alpha}}$. To that end, write 
\[
\frac{S_n^{(\mathbf{M})}(\mathbf{Z})}{n^{1/\alpha}}=\frac{S_n^{(\mathbf{M})}(\mathbf{Z})-S_n^{(\mathbf{M})}(\mathbf{Y})}{n^{1/\alpha}}+\frac{S_n^{(\mathbf{M})}(\mathbf{Y})}{n^{1/\alpha}}.
\]
It follows from Lemma \ref{lem: middle 2} and the convergence together lemma that 
\[
 \frac{S_n^{(\mathbf{M})}(\mathbf{Z})}{n^{1/\alpha}}\Rightarrow^d \aS\left(\sqrt[\alpha]{2\ln\left(\frac{2}{2-\alpha}\right)}\right).
\]
\end{proof}	
\begin{proof}[Proof of Lemma \ref{lem: middle 1} and Lemma \ref{lem: middle 2}.(a)]
Note that  if $k\geq \left(\frac{1}{\alpha}-\frac{1}{2}\right)\log(n)$ then $d_k=\left[2^{\frac{2\alpha k}{2-\alpha}}\right]\geq n$. Consequently $n+d_k\leq 2d_k$ and  
\[
S_n^{(\mathbf{M})}(f)=^dG_n\left(f_k\circ T^{j-1}:\ k\in\N,\ 1\leq j\leq 2d_k \right) 
\]
where 
\[
G_n\left(x_{k,j}:\ k\in\N,\ 1\leq j\leq 2d_k \right)=\sum_{k=(\frac{1}{\alpha}-\frac{1}{2})\log(n)+1}^{\frac{1}{\alpha}\log(n)}\sum_{j=1}^n\left(x_{k,j}-x_{k,j+d_k}\right) 
\]
Since $G_n$ is continuous and $\left(f_k\circ T^{j-1}\right)_{\{k\in\N,1\leq j\leq 2d_k \}}$ is distributed as $\left(Z_k(j)\right)_{\{k\in\N,1\leq j\leq 2d_k \}}$, we see that for all large $n$,
\begin{align*}
S_n^{(\mathbf{M})}(f)&=^dG_n\left(Z_k(j):\ k\in\N,\ 1\leq j\leq 2d_k \right)\\
&=S_n^{(\mathbf{M})}(\mathbf{Z}),
\end{align*}
concluding the proof of Lemma \ref{lem: middle 1}. 

Now by Lemma \ref{lem: facts on Z,X,Y}.(a), if $k\geq \left(\frac{1}{\alpha}-\frac{1}{2}\right)\log(n)$, then 
\begin{align*}
\sum_{j=1}^n \left(\left|Z_k(j)-Y_k(j)\right|+\left|Z_k(j+d_k)-Y_k(j+d_k)\right|\right)\leq \frac{n}{d_k}\leq 1.
\end{align*}
Lemma \ref{lem: middle 2}.(a) readily follows from this as for all large $n$ 
\[
\left|S_n^{(\mathbf{M})}(\mathbf{Z})-S_n^{(\mathbf{M})}(\mathbf{Y}) \right|\leq \frac{1}{\alpha}\log(n).
\]
\end{proof}	
The proof of Lemma \ref{lem: middle 2}(b) is more involved and is done in two stages. The first stage, which is Lemma \ref{lem: 1 CLT}, is to interchange the $Y_k$ random variables with  $X_k$'s. The second, Lemma \ref{lem: 2 CLT}, is to show the  distributional convergence of $n^{-1/\alpha}S_n^{(\mathbf{M})}(\mathbf{X})$. 
 \begin{lemma}\label{lem: 1 CLT}
 $\frac{1}{n^{1/\alpha}}\left(S_n^{(\mathbf{M})}(\mathbf{Y})-S_n^{(\mathbf{M})}(\mathbf{X})\right)$ converges to $0$ in probability. 
 \end{lemma}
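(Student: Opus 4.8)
The goal is to show that replacing the truncated variables $Y_k$ by the untruncated symmetric $\alpha$-stable variables $X_k$ in the middle block produces an asymptotically negligible difference, after scaling by $n^{1/\alpha}$. Recall that $Y_k(j)=X_k(j)\mathbf 1_{[2^k\le|X_k(j)|\le 2^{k^2}]}$, so the difference $X_k(j)-Y_k(j)$ is supported on the event $E_k(j):=\{|X_k(j)|<2^k\}\cup\{|X_k(j)|>2^{k^2}\}$. Writing the block of interest as $k$ ranging over $((\tfrac1\alpha-\tfrac12)\log n,\tfrac1\alpha\log n]$ and $1\le j\le n$, the plan is to control
\[
D_n:=S_n^{(\mathbf M)}(\mathbf X)-S_n^{(\mathbf M)}(\mathbf Y)=\sum_{k}\sum_{j=1}^n\Big[(X_k(j)-Y_k(j))-(X_k(j+d_k)-Y_k(j+d_k))\Big],
\]
and show $n^{-1/\alpha}D_n\to 0$ in probability.

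\textbf{Key steps.}
First I would split the truncation into a lower tail and an upper tail. For the \emph{lower} part (where $|X_k|<2^k$), note that for the relevant range $k>(\tfrac1\alpha-\tfrac12)\log n$ we have $2^k>n^{1/\alpha-1/2}$, which grows, so the lower cutoff is large; I would use the symmetry of $X_k$ together with a second-moment or variance estimate on the \emph{symmetrically truncated} contribution $X_k(j)\mathbf 1_{[|X_k(j)|<2^k]}$. Because the $X_k$ are symmetric, each term $X_k(j)\mathbf 1_{[|X_k(j)|<2^k]}$ has mean zero, and the $2n$ summands (over $j$ and the shifted $j+d_k$) are independent within a fixed $k$ and across different $k$. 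I would therefore compute the total variance of the lower part of $n^{-1/\alpha}D_n$, using that $X_k\sim \aS(k^{-1/\alpha})$ has truncated second moment $\E[X_k^2\mathbf 1_{[|X_k|<2^k]}]\approx \tfrac{1}{k}(2^k)^{2-\alpha}$ (up to the constant from Proposition \ref{prop: tails}), so each $k$ contributes variance of order $n\cdot\tfrac1k 2^{k(2-\alpha)}$; summing over the block and dividing by $n^{2/\alpha}$, the dominant term is at $k=\tfrac1\alpha\log n$, giving $2^{k(2-\alpha)}=n^{(2-\alpha)/\alpha}$ and hence variance of order $n^{1+(2-\alpha)/\alpha}/n^{2/\alpha}=n^{(2\alpha-2)/\alpha}/\log n\to 0$ for $\alpha<1$; for general $\alpha<2$ one checks the geometric sum is dominated by its top term and the $1/k$ factor gives the decay. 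For the \emph{upper} part (where $|X_k|>2^{k^2}$), I would instead use a first-moment/probabilistic bound: the probability that any $X_k(j)$ in the block exceeds $2^{k^2}$ is, by the tail estimate, of order $n\cdot\tfrac1k 2^{-\alpha k^2}$ per $k$, and summing over the block this probability tends to $0$, so with high probability the upper-truncation terms vanish identically.

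\textbf{Main obstacle.}
The delicate point is the lower-tail variance bookkeeping: the truncation level $2^k$ and the number of blocks both depend on $n$ through $k$, so one must verify that the geometric-type sum $\sum_k \tfrac1k 2^{k(2-\alpha)}$ over $k\in((\tfrac1\alpha-\tfrac12)\log n,\tfrac1\alpha\log n]$ is genuinely controlled by its largest term and that, after dividing by $n^{2/\alpha}$, the result vanishes for every $\alpha\in(0,2)$ rather than just for $\alpha<1$. I expect this to require carefully tracking the $k^{-1}$ dispersion-normalization factor, since without it the top term would be exactly of order $n^{2/\alpha}$ and the estimate would fail — it is precisely the choice $X_k\sim\aS(k^{-1/\alpha})$ and the block cutoff at $\tfrac1\alpha\log n$ that make the variance $o(n^{2/\alpha})$. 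Once both tails are shown negligible, Chebyshev (for the lower part) and a union bound (for the upper part) give convergence to $0$ in probability, completing the lemma.
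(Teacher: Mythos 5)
Your proposal follows essentially the same route as the paper: split $S_n^{(\mathbf M)}(\mathbf X)-S_n^{(\mathbf M)}(\mathbf Y)$ into an upper-tail part supported on $\{|X_k|>2^{k^2}\}$ (killed by a union bound, the paper uses Borel--Cantelli, since $d_k 2^{-\alpha k^2}$ is summable) and a lower-tail part supported on $\{|X_k|<2^{k}\}$ (killed by Chebyshev, using independence, symmetry, and the truncated-variance bound $\mathrm{Var}\big(X_k\mathbf 1_{[|X_k|\le 2^k]}\big)\lesssim k^{-1}2^{(2-\alpha)k}$). One arithmetic slip worth fixing: $n\cdot n^{(2-\alpha)/\alpha}/n^{2/\alpha}=1$, not $n^{(2\alpha-2)/\alpha}$, so the scaled lower-tail variance is of order $1/\log n$ and the argument works uniformly for all $\alpha\in(0,2)$, not just $\alpha<1$ --- exactly as your closing paragraph correctly explains, with the $k^{-1}$ dispersion factor supplying the decay.
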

\begin{lemma}\label{lem: 2 CLT}$ $
$\frac{1}{n^{1/\alpha}}S_n^{(\mathbf{M})}(\mathbf{X})\Rightarrow ^d S_\alpha S\left(\sqrt[\alpha]{2\ln\left(\frac{2}{2-\alpha}\right)}\right)$. 
\end{lemma}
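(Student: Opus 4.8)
The plan is to exploit the exact stability of the $S\alpha S$ family: since $S_n^{(\mathbf{M})}(\mathbf{X})$ is a finite linear combination of independent symmetric $\alpha$-stable variables, it is itself symmetric $\alpha$-stable, and the whole statement reduces to computing the limit of its dispersion parameter.

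First I would fix $k$ in the summation range $(\frac{1}{\alpha}-\frac{1}{2})\log n< k\le \frac{1}{\alpha}\log n$ and record that, as already observed in the proof of Lemma \ref{lem: middle 1}, this range forces $d_k\ge n$, so the $2n$ indices $1,\dots,n,\,d_k+1,\dots,d_k+n$ are distinct and all lie in $\{0,\dots,2d_k\}$. Hence the inner sum $\sum_{j=1}^n\left(X_k(j)-X_k(j+d_k)\right)$ is a sum of $2n$ independent $S\alpha S(k^{-1/\alpha})$ variables, the signs being immaterial by symmetry. By the summation property of $S\alpha S$ laws (characteristic functions multiply, so the $\alpha$-th powers of the dispersions add), this inner sum is $S\alpha S(\sigma_{n,k})$ with $\sigma_{n,k}^\alpha = 2n/k$, using that $(k^{-1/\alpha})^\alpha = 1/k$.

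Next, invoking independence across $k$ (property (b) of the array) together with the same summation property, $S_n^{(\mathbf{M})}(\mathbf{X})$ is $S\alpha S(\Sigma_n)$ with $\Sigma_n^\alpha = 2n\sum_k 1/k$, the sum ranging over $(\frac{1}{\alpha}-\frac{1}{2})\log n< k\le \frac{1}{\alpha}\log n$. Since $cX\sim S\alpha S(|c|\sigma)$ whenever $X\sim S\alpha S(\sigma)$, the normalised variable $n^{-1/\alpha}S_n^{(\mathbf{M})}(\mathbf{X})$ is \emph{exactly} $S\alpha S(\Sigma_n/n^{1/\alpha})$, and the factor $n$ cancels to leave $(\Sigma_n/n^{1/\alpha})^\alpha = 2\sum_k 1/k$.

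It then remains to evaluate the harmonic sum. I would use $\sum_{k=a+1}^b 1/k = \ln b-\ln a+o(1)$ with $a=(\frac{1}{\alpha}-\frac{1}{2})\log n$ and $b=\frac{1}{\alpha}\log n$; the common factor $\log n$ cancels in the ratio, giving $\sum_k 1/k\to \ln\frac{1/\alpha}{1/\alpha-1/2}=\ln\frac{2}{2-\alpha}$. Consequently the dispersion parameter of $n^{-1/\alpha}S_n^{(\mathbf{M})}(\mathbf{X})$ converges to $\big(2\ln\frac{2}{2-\alpha}\big)^{1/\alpha}$, and since the characteristic functions $\exp\!\big(-(\Sigma_n/n^{1/\alpha})^\alpha|\theta|^\alpha\big)$ converge pointwise to $\exp\!\big(-2\ln(\tfrac{2}{2-\alpha})|\theta|^\alpha\big)$, the convergence in distribution to $S\alpha S\big(\sqrt[\alpha]{2\ln\frac{2}{2-\alpha}}\big)$ follows by L\'evy's continuity theorem. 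Because every step is an exact identity for stable laws, there is no genuine analytic obstacle; the only points demanding care are the index bookkeeping that guarantees independence of the $2n$ summands within each block, and the routine harmonic-sum asymptotics, where the floor functions implicit in the summation limits must be checked not to affect the limit.
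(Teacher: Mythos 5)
Your proposal is correct and follows essentially the same route as the paper: for each $k$ in the middle range the inner sum is exactly $S\alpha S$ with dispersion $(2n/k)^{1/\alpha}$ because $d_k\ge n$ makes the $2n$ summands independent, independence across $k$ lets the $\alpha$-th powers of the dispersions add, and the harmonic sum over $(\frac{1}{\alpha}-\frac{1}{2})\log n<k\le\frac{1}{\alpha}\log n$ converges to $\ln\frac{2}{2-\alpha}$, after which L\'evy's continuity theorem (the paper's Fact on convergence of $\aS(\sigma_n)$ laws) finishes the argument. The only cosmetic difference is that you phrase the per-block step as a sum of $2n$ independent $S\alpha S(k^{-1/\alpha})$ variables while the paper treats the $n$ differences $X_k(j)-X_k(j+d_k)$ as i.i.d.\ $\aS\bigl(\sqrt[\alpha]{2/k}\bigr)$; these are the same computation.
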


\begin{proof}[Proof of Lemma \ref{lem: 2 CLT}]
For every $k>\left(\frac{1}{\alpha}-\frac{1}{2}\right)\log(n)$, $d_k>n$ and $d_k+n\leq 2d_k$. 

Therefore, for all $n$ and $k>\left(\frac{1}{\alpha}-\frac{1}{2}\right)\log(n)$,  $\left(X_k(j)-X_k(j+d_k)\right)_{k\in\N,1\leq j\leq n}$ is a sequence of i.i.d. $\aS\left(\sqrt[\alpha]{\frac{2}{k}}\right)$ random variables. It follows that 
\[
\sum_{j=1}^n \left(X_k(j)-X_k(j+d_k)\right)\sim^d \aS\left(\left(\frac{2n}{k}\right)^{1/\alpha}\right).
\]

Secondly, since $\left\{X_k(j):k\in\N,\ 1\leq j\leq 2d_k\right\}$ are independent, we see that 
\[
\left\{\sum_{j=1}^n \left(X_k(j)-X_k(j+d_k)\right):\ k>\left(\frac{1}{\alpha}-\frac{1}{2}\right)\log(n)\right\}
\] 
are independent $\aS$ random variables. 
As a result, $\frac{1}{n^{1/\alpha}}S_n^{(\mathbf{M})}(\mathbf{X})$ is $S_\alpha S(\sigma_n)$ distributed with 
\[
\sigma_n^\alpha=\frac{1}{n}\sum_{k=\left(\frac{1}{\alpha}-\frac{1}{2}\right)\log(n)+1}^{\frac{1}{\alpha}\log(n)}\frac{2n}{k}\sim 2\ln\left(\frac{2}{2-\alpha}\right),\ \text{as}\ n\to\infty. 
\]	
We conclude from this and Fact \ref{Facts of life} that 
\[
\frac{1}{n^{1/\alpha}}S_n^{(\mathbf{M})}(\mathbf{X})\Rightarrow^d\aS \left(\sqrt[\alpha]{2\ln\left(\frac{2}{2-\alpha}\right)}\right).
\]

\end{proof}
The rest of this subsection is concerned with the proof of Lemma \ref{lem: 1 CLT}. Observe that 
\begin{equation}\label{eq: V and S}
S_n^{(\mathbf{M})}(\mathbf{X})-S_n^{(\mathbf{M})}(\mathbf{Y})=\overline{V}_n^{(\mathbf{M})}+\underbar{V}_n^{(\mathbf{M})}
\end{equation}
where
 \begin{align*}
 \overline{V}_n^{(\mathbf{M})}&= \sum_{k=(\frac{1}{\alpha}-\frac{1}{2})\log(n)+1}^{\frac{1}{\alpha}\log(n)}\sum_{j=1}^n \left(X_k(j)1_{\left[\left|X_k(j)\right|\geq 2^{k^2}\right]}- X_k(j+d_k)1_{\left[\left|X_k(j+d_k)\right|\geq 2^{k^2}\right] }\right) \\
  \underbar{V}_n^{(\mathbf{M})}&= \sum_{k=(\frac{1}{\alpha}-\frac{1}{2})\log(n)+1}^{\frac{1}{\alpha}\log(n)}\sum_{j=1}^n \left(X_k(j)1_{\left[\left|X_k(j)\right|\leq 2^{k}\right]}- X_k(j+d_k)1_{\left[\left|X_k(j+d_k)\right|\leq 2^{k}\right] }\right).\\
 \end{align*}
 \begin{lemma}\label{lem: V overline}
  $\frac{1}{n^{1/\alpha}}\overline{V}_n^{(\mathbf{M})}\to 0$ in probability. 
 \end{lemma}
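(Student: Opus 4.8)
The plan is to show that $\overline{V}_n^{(\mathbf{M})}$ is in fact \emph{identically zero with high probability}, which trivially forces the normalized quantity to $0$ in probability (indeed in a much stronger sense than needed). Each summand in $\overline{V}_n^{(\mathbf{M})}$ is supported on the event that $|X_k(j)|$ or $|X_k(j+d_k)|$ exceeds the threshold $2^{k^2}$; hence $\{\overline{V}_n^{(\mathbf{M})} \neq 0\}$ is contained in the union of these exceedance events, and it suffices to prove
\[
\P\left(\overline{V}_n^{(\mathbf{M})} \neq 0\right) \xrightarrow[n\to\infty]{} 0.
\]

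First I would apply the union bound over the relevant ranges of $k$ and $j$. Since $\{X_k(j)\}$ are $\aS\big(k^{-1/\alpha}\big)$, the tail estimate of Proposition \ref{prop: tails} (used exactly as in Lemma \ref{lem: facts on Z,X,Y}.(b), but now with threshold $2^{k^2}$ in place of $2^k$) gives a constant $C_\alpha$, independent of $k$ and $j$, with
\[
\P\left(|X_k(j)| \geq 2^{k^2}\right) \leq \frac{C_\alpha}{k}\, 2^{-\alpha k^2}.
\]
Because each of $j$ and $j+d_k$ ranges over $n$ distinct indices in $[1,2d_k]$ (recall that $d_k\geq n$ throughout this range of $k$, so the shifted indices remain valid and the embedded variables are genuinely as in the array), the union bound yields
\[
\P\left(\overline{V}_n^{(\mathbf{M})} \neq 0\right) \leq 2n \sum_{k=(\frac{1}{\alpha}-\frac{1}{2})\log(n)+1}^{\frac{1}{\alpha}\log(n)} \frac{C_\alpha}{k}\, 2^{-\alpha k^2}.
\]

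Finally I would estimate the right-hand side using the smallest index $k_0 = \big(\frac{1}{\alpha}-\frac{1}{2}\big)\log(n)$ in the sum, where the exceedance probability is largest. Since $\log$ is base $2$, there $2^{-\alpha k_0^2} = n^{-\alpha(\frac{1}{\alpha}-\frac{1}{2})^2 \log(n)}$, which decays faster than any power of $n$; the sum has only $\frac{1}{2}\log(n)$ terms, so the whole expression is at most
\[
2nC_\alpha \cdot \tfrac12\log(n) \cdot n^{-\alpha(\frac{1}{\alpha}-\frac{1}{2})^2 \log(n)} \xrightarrow[n\to\infty]{} 0.
\]
This completes the argument.

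I do not expect a genuine obstacle here: the entire content lies in the observation that the upper truncation level $2^{k^2}$ grows so fast in $k$ that the exceedance probability is super-polynomially small and easily survives the crude $O(n\log n)$ union bound. The only points requiring minor care are the bookkeeping — confirming $j$ and $j+d_k$ stay within $[1,2d_k]$ so the relevant $X_k$-variables are i.i.d. as claimed — and keeping the base-$2$ convention straight so that $2^{-\alpha k_0^2}$ is correctly converted into a negative power of $n$ whose exponent itself grows in $\log(n)$.
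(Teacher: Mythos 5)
Your argument is correct and rests on the same two ingredients as the paper's proof: the observation that $\bigl\{\overline{V}_n^{(\mathbf{M})}\neq 0\bigr\}$ is contained in the union of the exceedance events $\{|X_k(j)|\geq 2^{k^2}\}$, and the tail bound of Proposition \ref{prop: tails} applied with $\sigma^\alpha=1/k$ and $t=2^{k^2}$. The only (harmless) difference is the final step: the paper takes the union over all $2d_k$ indices, notes that $\sum_k d_k 2^{-\alpha k^2}<\infty$, and invokes Borel--Cantelli to get that $\overline{V}_n^{(\mathbf{M})}$ vanishes eventually almost surely, whereas you union over only the $2n$ relevant indices and bound $\P\bigl(\overline{V}_n^{(\mathbf{M})}\neq 0\bigr)$ directly, which yields exactly the claimed convergence in probability.
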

\begin{proof}
We write for all $k,j\in\N$, $\widehat{X}_k(j)=X_k1_{\left[\left|X_k(j)\right|\geq 2^{k^2}\right] }$ so that for every $n\in\N$,
\[
\overline{V}_n^{(\mathbf{M})}=\sum_{k=\left(\frac{1}{\alpha}-\frac{1}{2}\right)\log(n)+1}^{\frac{1}{\alpha}\log(n)}\sum_{j=1}^n \left( \widehat{X}_k(j)-\widehat{X}_k(j+d_k)\right). 
\] For $k\in\mathbb{N}$, let $A_k$ be the event
\[
\left\{\exists j\in [1,2d_k], \widehat{X}_k(j)\neq 0 \right\}. 
\]
Similarly to the proof of Proposition \ref{MainCLT}.(c), there exists $C_\alpha>0$ such that for all but finitely many $k\in\mathbb{N}$, 
\begin{align*}
\P\big(A_k\big)&\leq 2d_k\P\left(\left|X_k(1)\right|\geq 2^{k^2}\right)\\
&\leq 2C_\alpha d_k2^{-\alpha k^2}. 
\end{align*}
The right hand side being summable,  the Borel-Cantelli lemma implies that $\P$- almost surely, $A_k$ happens only for finitely many $k$'s. We now deduce the claim from this fact. 

For all $ \left(\frac{1}{\alpha}-\frac{1}{2}\right)\log(n)\leq k\leq \frac{1}{\alpha}\log(n)$, $n\leq d_k$ and 
\[
\left[\sum_{j=1}^n \left( \widehat{X}_k(j)-\widehat{X}_k(j+d_k)\right)\neq 0\right]\subset A_k.
\]
 
Since $\log(n)\to\infty$ and almost surely $A_k$ happens finitely often we have $\lim_{n\to\infty}\frac{1}{n^{1/\alpha}}\overline{V}_n^{(\mathbf{M})} =0$ almost surely.



\end{proof}
\begin{lemma}\label{lem: V underline}
 $\frac{1}{n^{1/\alpha}}\underbar{V}_n^{(\mathbf{M})}\to 0$ in probability. 
\end{lemma}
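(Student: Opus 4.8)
The plan is to run the second moment method. Write $\widetilde{X}_k(j):=X_k(j)1_{[|X_k(j)|\le 2^k]}$, so that
\[
\underbar{V}_n^{(\mathbf{M})}=\sum_{k=(\frac1\alpha-\frac12)\log(n)+1}^{\frac1\alpha\log(n)}\sum_{j=1}^n\left(\widetilde{X}_k(j)-\widetilde{X}_k(j+d_k)\right).
\]
Each $\widetilde{X}_k(j)$ is bounded by $2^k$ and symmetric, hence has mean zero and finite variance, so $\underbar{V}_n^{(\mathbf{M})}$ is centered. It therefore suffices to show $\var\big(\underbar{V}_n^{(\mathbf{M})}\big)=o\big(n^{2/\alpha}\big)$, since Chebyshev then gives, for every $\e>0$, $\P\big(|\underbar{V}_n^{(\mathbf{M})}|>\e\, n^{1/\alpha}\big)\le \e^{-2}n^{-2/\alpha}\var\big(\underbar{V}_n^{(\mathbf{M})}\big)\to 0$.

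Next I would show the summands are mutually independent, so the variance is additive. For $k$ in the summation range one has $d_k\ge n$, hence the indices $1,\dots,n,d_k+1,\dots,d_k+n$ are distinct and lie in $[1,2d_k]$; thus for fixed $k$ the increments $\widetilde{X}_k(j)-\widetilde{X}_k(j+d_k)$, $1\le j\le n$, are independent, and independence across $k$ follows from property (b) of the array. As each increment is the difference of two i.i.d.\ symmetric terms,
\[
\var\left(\widetilde{X}_k(j)-\widetilde{X}_k(j+d_k)\right)=2\,\E\!\left[X_k(1)^2 1_{[|X_k(1)|\le 2^k]}\right],
\]
so that $\var\big(\underbar{V}_n^{(\mathbf{M})}\big)=2n\sum_{k\in I_n}\E[X_k(1)^2 1_{[|X_k(1)|\le 2^k]}]$, where $I_n$ denotes the range of $k$.

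The core estimate is the truncated second moment. Writing $X_k(1)=k^{-1/\alpha}X_0$ with $X_0\sim^d\aS(1)$ and setting $a_k=2^k k^{1/\alpha}$, I get $\E[X_k(1)^2 1_{[|X_k(1)|\le 2^k]}]=k^{-2/\alpha}\,\E[X_0^2 1_{[|X_0|\le a_k]}]$. From the tail asymptotics recorded in Proposition \ref{prop: tails} (Karamata's theorem for a regularly varying tail of index $-\alpha$ with $\alpha<2$), $\E[X_0^2 1_{[|X_0|\le a]}]\sim c_\alpha a^{2-\alpha}$ as $a\to\infty$; since $a_k\to\infty$ this yields
\[
\E\!\left[X_k(1)^2 1_{[|X_k(1)|\le 2^k]}\right]\lesssim k^{-2/\alpha}a_k^{2-\alpha}=k^{-2/\alpha}2^{k(2-\alpha)}k^{(2-\alpha)/\alpha}=\frac{2^{k(2-\alpha)}}{k}.
\]
Substituting back and using $k\approx\log n$ throughout $I_n$, the variance is bounded by a constant times $\frac{n}{\log n}\sum_{k\in I_n}2^{k(2-\alpha)}$, a geometric sum with ratio $2^{2-\alpha}>1$ dominated by its top term at $k=\frac1\alpha\log n$, where $2^{k(2-\alpha)}=n^{(2-\alpha)/\alpha}$. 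Hence $\var\big(\underbar{V}_n^{(\mathbf{M})}\big)\lesssim \frac{n}{\log n}\,n^{(2-\alpha)/\alpha}=\frac{n^{2/\alpha}}{\log n}=o\big(n^{2/\alpha}\big)$, as needed.

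I expect the main obstacle to be the bookkeeping of the truncated second moment at the exact order $\frac1k 2^{k(2-\alpha)}$ and confirming that the geometric summation over $I_n$ genuinely produces the extra $1/\log n$ saving. It is precisely this $1/\log n$ factor, rather than any cancellation, that makes the variance $o\big(n^{2/\alpha}\big)$, so one must check that the top scale $k\approx\frac1\alpha\log n$ contributes exactly order $n^{2/\alpha}/\log n$ and no more.
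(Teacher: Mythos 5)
Your proposal is correct and follows essentially the same route as the paper: a Chebyshev/second-moment argument exploiting independence of the increments within and across scales $k$, with the truncated second moment bounded by $C\,2^{(2-\alpha)k}/k$ and the resulting geometric sum over $k\in I_n$ yielding $\var\big(\underbar{V}_n^{(\mathbf{M})}\big)\lesssim n^{2/\alpha}/\log n$. The only cosmetic difference is that you derive the key bound $\E\big[X_k(1)^2 1_{[|X_k(1)|\le 2^k]}\big]\lesssim 2^{(2-\alpha)k}/k$ by scaling to $\aS(1)$ and invoking Karamata, whereas the paper packages the identical estimate as Lemma \ref{lem: variance of truncation} (proved from the tail bound of Proposition \ref{prop: tails}).
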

For the proof of Lemma \ref{lem: V underline} we need the following variance bound.

\begin{proof}[Proof of Lemma \ref{lem: V underline}]
Write $\widetilde{X}_k(j)=X_k(j)1_{|X_k(j)|\leq 2^k}$. Fix $k>\left(\frac{1}{\alpha}-\frac{1}{2}\right)\log(n)$ so that $n\leq d_k$. The sequence $\left\{\tilde{X}_k(j):\ 1\leq j\leq n+d_k\right\}$ is an i.i.d sequence of symmetric random variables. We have,
\begin{align*}
\mathbb{E}\left(\left(\sum_{j=1}^n\left(\widetilde{X_k}(j)-\widetilde{X_k}(j+d_k)\right)\right)^2\right)&=\sum_{j=1}^n\left(\mathrm{Var}\left(\widetilde{X_k}(j)\right)+\mathrm{Var}\left(\widetilde{X_k}(j+d_k)\right)\right)\\
&=2n\mathrm{Var}\left(\widetilde{X}_k(1)\right). 
\end{align*}
Since $X_k(1)$ is $\aS\left(k^{-1/\alpha}\right)$ distributed, it follows from Lemma \ref{lem: variance of truncation} with $K=2^k$ that,
\[
\mathbb{E}\left(\left(\sum_{j=1}^n\left(\widetilde{X_k}(j)-\widetilde{X_k}(j+d_k)\right)\right)^2\right)\leq 2Cn\frac{2^{(2-\alpha)k}}{k}.
\]
Now by properties (a) and (b) of the array $\left(X_k(j)\right)_{k,j\in\N}$, $$\left\{\sum_{j=1}^n\left(\widetilde{X_k}(j)-\widetilde{X_k}(j+d_k)\right):\ \left(\frac{1}{\alpha}-\frac{1}{2}\right)\log(n)< k\leq \frac{1}{\alpha}\log(n)\right\}$$ are independent, zero mean random variables, therefore
\begin{align*}
\mathbb{E}\left(\left(n^{-1/\alpha}\underline{V}_n^{(\mathbf{M})}\right)^2\right)&=n^{-2/\alpha}\sum_{k=\left(\frac{1}{\alpha}-\frac{1}{2}\right)\log(n)+1}^{\frac{1}{\alpha}\log(n)}\mathbb{E}\left(\left(\sum_{j=1}^n\left(\widetilde{X_k}(j)-\widetilde{X_k}(j+d_k)\right)\right)^2\right)\\
&\leq 2Cn^{-2/\alpha} \sum_{k=\left(\frac{1}{\alpha}-\frac{1}{2}\right)\log(n)+1}^{\frac{1}{\alpha}\log(n)}\frac{n2^{(2-\alpha)k}}{k}\\
&\lesssim n^{-\frac{2}{\alpha}+1}\frac{2^{\left(\frac{2-\alpha}{\alpha}\right)\log(n)}}{\log(n)}=\frac{1}{\log(n)}\xrightarrow[n\to\infty]{}0. 
\end{align*}
A routine application of Markov's inequality shows that $n^{-1/\alpha}\underline{V}_n^{(\mathbf{M})}$ tends to $0$ in probability. 

\end{proof}	
\begin{proof}[Proof of Lemma \ref{lem: 1 CLT}]
The result readily follows from Lemmas \ref{lem: V overline} and \ref{lem: V underline} and equation \eqref{eq: V and S}. 

\end{proof}		
We have now concluded the proof of Proposition \ref{MainCLT}.(b). 
\subsection{Proving Proposition \ref{MainCLT}.(a)}
Write 
\[
\mathsf{G}_n(f)=\sum_{k=1}^{(\frac{1}{\alpha}-\frac{1}{2})\log(n)} S_{d_k}(f_k)
\]
\begin{proposition}\label{prop:small 1}$ $
	\begin{itemize}
		\item[(a)] For all $n\in\N$, $S_n^{(\mathbf{S})}(f)=\mathsf{G}_n(f)-U^n\left(\mathsf{G}_n(f)\right)$.
		\item[(b)] $\frac{1}{n^{1/\alpha}}\mathsf{G}_n(f)\to 0$ in probability.  
	\end{itemize} 
\end{proposition}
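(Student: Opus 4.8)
The plan is to treat the two parts separately: part (a) is an elementary reindexing, while part (b) rests on a truncation argument.

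For part (a), the key observation is that for every $k \le \left(\frac1\alpha - \frac12\right)\log(n)$ we have $d_k = \left[2^{2\alpha k/(2-\alpha)}\right] \le 2^{\log n} = n$. Fixing such a $k$ and abbreviating $a_j = f_k\circ T^j$, both $S_n(f_k) - U^{d_k}S_n(f_k) = \sum_{j=0}^{n-1}a_j - \sum_{j=d_k}^{d_k+n-1}a_j$ and $S_{d_k}(f_k) - U^n S_{d_k}(f_k) = \sum_{j=0}^{d_k-1}a_j - \sum_{j=n}^{d_k+n-1}a_j$ collapse, after cancelling the common block of indices, to $\sum_{j=0}^{d_k - 1}a_j - \sum_{j=n}^{d_k + n - 1}a_j$; here $d_k \le n$ is exactly what guarantees that the cancelled block is the full range $\{d_k,\dots,n-1\}$. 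Hence the two expressions coincide term by term in $k$, and summing over $k$ and pulling $U^n$ out of the finite sum yields $S_n^{(\mathbf{S})}(f) = \mathsf{G}_n(f) - U^n\mathsf{G}_n(f)$.

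For part (b), I would first replace the dynamical sum by the target array: since $\left(f_k\circ T^{j-1}\right)$ is distributed as $\left(Z_k(j)\right)$ by Corollary \ref{cor: embed} and $\mathsf{G}_n(f)$ is a fixed continuous function of the array, $\mathsf{G}_n(f) =^d \sum_{k=1}^{(\frac1\alpha-\frac12)\log n}\sum_{j=1}^{d_k}Z_k(j)$, with the inner blocks independent across $k$. As convergence in probability to the constant $0$ is equivalent to convergence in distribution, it suffices to show this sum is $o(n^{1/\alpha})$ in probability. I would then split each $Z_k(j)$ into a truncated part $\overline{Z}_k(j) = Z_k(j)\mathbf{1}_{[|Z_k(j)|\le n^{1/\alpha}]}$ and a tail part. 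For the tail part, a union bound over the finitely many indices gives $\P\left(\exists k,j:\ |Z_k(j)|>n^{1/\alpha}\right) \le \sum_{k} d_k\,\P\left(|X_k|\ge n^{1/\alpha}\right) \le \frac{C_\alpha}{n}\sum_k \frac{d_k}{k}$, using the tail estimate behind Lemma \ref{lem: facts on Z,X,Y}. For the truncated part, which is symmetric, mean-zero, and independent across $k$, I would bound its variance by $\sum_k d_k\,\E\!\left(\overline{Z}_k(1)^2\right)$, and estimate $\E\!\left(\overline{Z}_k(1)^2\right) \le C\,n^{(2-\alpha)/\alpha}/k$ from Lemma \ref{lem: variance of truncation} (applied at truncation level of order $n^{1/\alpha}$, after comparing $Z_k$ to $X_k$ via Lemma \ref{lem: facts on Z,X,Y}.(a)), so that $\mathrm{Var}\!\left(n^{-1/\alpha}\sum_{k,j}\overline{Z}_k(j)\right) \le \frac{C}{n}\sum_k \frac{d_k}{k}$.

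Both estimates hinge on the same geometric sum $\sum_{k=1}^{K}\frac{d_k}{k}$ with $K = \left(\frac1\alpha-\frac12\right)\log n$: its terms grow geometrically with ratio $\approx 2^{2\alpha/(2-\alpha)}$ and are therefore dominated by the last term $\frac{d_K}{K}$, and since $d_K\le n$ this sum is $\lesssim \frac{n}{K}$. Feeding this in, both the tail probability and the rescaled variance are $\lesssim \frac{1}{K} = O(1/\log n)\to 0$, so the tail part vanishes in probability and the truncated part converges to $0$ in $L^2$, giving the claim. The delicate point — and the reason for the exponent $\frac{2\alpha k}{2-\alpha}$ in the definition of $d_k$ — is precisely this bookkeeping: the definition forces $d_K\approx n$, while the identity $\frac{2}{\alpha}-\frac{2-\alpha}{\alpha}=1$ makes the rescaled variance scale like $\frac1n\sum_k\frac{d_k}{k}$, so the whole quantity degrades only at the harmless rate $1/\log n$. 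The main obstacle is getting the truncation level and this exponent balance exactly right; once the band-variance estimate and the geometric-sum bound are in place, the remaining steps (Markov's inequality for the variance term, independence across $k$) are routine.
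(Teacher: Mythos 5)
Your argument is correct, and part (a) coincides with the paper's proof: both reduce to the identity $S_n(f_k)-U^{d_k}S_n(f_k)=S_{d_k}(f_k)-U^{n}S_{d_k}(f_k)$, which holds precisely because $d_k\le n$ in the range $k\le(\tfrac1\alpha-\tfrac12)\log(n)$. For part (b), however, you take a genuinely different route. The paper keeps the truncation levels $2^k$ and $2^{k^2}$ built into the construction and passes along the chain $\mathsf{G}_n(\mathbf{Z})\to\mathsf{G}_n(\mathbf{Y})\to\mathsf{G}_n(\mathbf{X})$: the upper tail $\widehat{X}$ is killed by Borel--Cantelli, the lower truncation $\widetilde{X}$ by a variance bound (Lemma \ref{lem: last}), and the remaining $\mathsf{G}_n(\mathbf{X})$ is an exact $\aS(\sigma(n))$ variable with $\sigma(n)^\alpha=\frac1n\sum_k d_k/k\to0$, so Fact \ref{Facts of life} finishes. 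You instead stop at $\mathsf{G}_n(\mathbf{Z})$ and impose a single truncation at the natural level $n^{1/\alpha}$: a union bound (via $|Z_k|\le|Y_k|\le|X_k|$ and Proposition \ref{prop: tails}) for the tail part, and Chebyshev for the bounded symmetric part. Both arguments funnel into the same key estimate $\frac1n\sum_{k\le(\frac1\alpha-\frac12)\log n}d_k/k\lesssim 1/\log n$, so they are of comparable difficulty; yours is somewhat more self-contained, since it never uses the stability of the $X_k$ (only tail and truncated second-moment bounds) and needs no almost-sure/Borel--Cantelli step, whereas the paper's version recycles the $\widehat{X},\widetilde{X}$ machinery already set up for Proposition \ref{MainCLT}.(b). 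One detail to tighten: the events $\left[|Z_k|\le n^{1/\alpha}\right]$ and $\left[|X_k|\le n^{1/\alpha}\right]$ do not coincide (e.g.\ $Z_k=0$ when $|X_k|>2^{k^2}$), so Lemma \ref{lem: variance of truncation} does not apply verbatim to $\overline{Z}_k$; instead bound $\overline{Z}_k(1)^2\le\min\left(X_k(1)^2,\,n^{2/\alpha}\right)$ and control the two resulting terms by Lemma \ref{lem: variance of truncation} and Proposition \ref{prop: tails} respectively --- this still yields $O\left(n^{(2-\alpha)/\alpha}/k\right)$, so nothing in your scheme breaks.
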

\begin{proof}[Proof of Proposition \ref{prop:small 1}.(a)]
	For all $k\leq \left(\frac{1}{\alpha}-\frac{1}{2}\right)\log(n)$, $d_k\leq n$. Consequently,
	\[
	S_n(f_k)-U^{d_k}S_n(f_k)=S_{d_k}(f_k)-U^{n}S_{d_k}(f_k).
	\]
	Identity (a) follows from summing these identities over all $1\leq k\leq \left(\frac{1}{\alpha}-\frac{1}{2}\right)\log(n)$. 
\end{proof}
The proof of part(b) in \ref{prop:small 1} is longer and goes along identical lines as in Subsection \ref{sub: prop 7.(b)}. Recall the notation 
\[\widehat{X}_k(j)=X_k(j)1_{\left[|X_k(j)|>2^{k^2}\right]}\ \text{and} \ \ 	\widetilde{X}_k(j)=X_k(j)1_{\left[|X_k(j)|<2^{k}\right]}.
\]
For $W\in \{X,\widetilde{X},\widehat{X},Y,Z\}$, write 
\[
\mathsf{G}_n(\mathbf{W})= \sum_{k=1}^{(\frac{1}{\alpha}-\frac{1}{2})\log(n)} \sum_{j=1}^{d_k} W_k(j) 
\]
Proposition \ref{prop:small 1}.(b) follows directly from the following lemma.
\begin{lemma}$ $
	\begin{itemize}
		\item[(a)] For every $n\in\N$, $\mathsf{G}_n(f)=^d\mathsf{G}_n(\mathbf{Z})$. 
		\vspace{0.5mm}
		\item[(b)] $\frac{\mathsf{G}_n(\mathbf{Z})-\mathsf{G}_n(\mathbf{Y})}{n^{1/\alpha}}\xrightarrow[n\to\infty]{}0$ pointwise. 
		\vspace{0.5mm}
		\item[(c)] $\frac{\mathsf{G}_n(\mathbf{Y})-\mathsf{G}_n(\mathbf{X})}{n^{1/\alpha}}\xrightarrow[n\to\infty]{}0$ in probability. 
		\vspace{0.5mm}
		\item[(d)] $\frac{\mathsf{G}_n(\mathbf{X})}{n^{1/\alpha}}\xrightarrow[n\to\infty]{}0$ in probability. 
		
	\end{itemize}
	
\end{lemma}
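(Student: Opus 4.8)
The plan is to run the four parts in parallel with Subsection \ref{sub: prop 7.(b)}, with the middle range of $k$ replaced by $1\le k\le(\frac1\alpha-\frac12)\log n$ and the length-$n$ shifted inner sums replaced by the plain sums $\sum_{j=1}^{d_k}$. Part (a) reproduces Lemma \ref{lem: middle 1}: the map $H_n(x_{k,j})=\sum_{k=1}^{(\frac1\alpha-\frac12)\log n}\sum_{j=1}^{d_k}x_{k,j}$ is continuous and only reads coordinates with $1\le j\le d_k\le 2d_k$, so applying it to the array $(f_k\circ T^{j-1})$, which by Corollary \ref{cor: embed} is distributed as $(Z_k(j))$, yields $\mathsf{G}_n(f)=^d\mathsf{G}_n(\mathbf Z)$. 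Part (b) is the analogue of Lemma \ref{lem: middle 2}.(a): Lemma \ref{lem: facts on Z,X,Y}.(a) gives $|Z_k(j)-Y_k(j)|\le 1/d_k$, whence $|\mathsf{G}_n(\mathbf Z)-\mathsf{G}_n(\mathbf Y)|\le\sum_{k=1}^{(\frac1\alpha-\frac12)\log n}d_k\cdot d_k^{-1}=(\frac1\alpha-\frac12)\log n$, a deterministic $o(n^{1/\alpha})$ bound.

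For part (c) I would use the splitting $X_k(j)=\widetilde X_k(j)+Y_k(j)+\widehat X_k(j)$, so that $\mathsf{G}_n(\mathbf Y)-\mathsf{G}_n(\mathbf X)=-\mathsf{G}_n(\widetilde{\mathbf X})-\mathsf{G}_n(\widehat{\mathbf X})$, and treat the two pieces as in Lemmas \ref{lem: V overline} and \ref{lem: V underline}. For the large piece, the Borel--Cantelli estimate of Lemma \ref{lem: V overline} shows that almost surely the events $A_k=\{\exists\, j\le 2d_k:\ \widehat X_k(j)\ne 0\}$ occur for only finitely many $k$; hence $\sum_{j=1}^{d_k}\widehat X_k(j)=0$ for all large $k$, $\mathsf{G}_n(\widehat{\mathbf X})$ converges almost surely to a finite random constant, and $n^{-1/\alpha}\mathsf{G}_n(\widehat{\mathbf X})\to 0$ almost surely. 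For the small piece, the $\widetilde X_k(j)$ are independent and symmetric, so $\E(\mathsf{G}_n(\widetilde{\mathbf X})^2)=\sum_k d_k\,\mathrm{Var}(\widetilde X_k(1))$; bounding each variance by $C2^{(2-\alpha)k}/k$ via Lemma \ref{lem: variance of truncation} with $K=2^k$ and dividing by $n^{2/\alpha}$, one checks $n^{-2/\alpha}\sum_{k=1}^{(\frac1\alpha-\frac12)\log n}d_k2^{(2-\alpha)k}/k\lesssim n^{-(2-\alpha)/2}/\log n\to 0$, and Chebyshev gives convergence to $0$ in probability.

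Part (d) is the analogue of Lemma \ref{lem: 2 CLT}, now producing a vanishing dispersion. Since $\sum_{j=1}^{d_k}X_k(j)\sim^d \aS((d_k/k)^{1/\alpha})$ and the blocks for distinct $k$ are independent, $\mathsf{G}_n(\mathbf X)$ is $\aS(\tau_n)$ with $\tau_n^\alpha=\sum_{k=1}^{(\frac1\alpha-\frac12)\log n}d_k/k$, so that $n^{-1/\alpha}\mathsf{G}_n(\mathbf X)\sim^d \aS(\tau_n n^{-1/\alpha})$ with $(\tau_n n^{-1/\alpha})^\alpha=\frac1n\sum_{k=1}^{(\frac1\alpha-\frac12)\log n}d_k/k$. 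A symmetric $\alpha$-stable variable whose dispersion tends to $0$ converges to $0$ in probability, so it remains to show this normalized sum vanishes.

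The crux is the estimate $\frac1n\sum_{k=1}^{(\frac1\alpha-\frac12)\log n}\frac{d_k}{k}\to 0$, together with the parallel top-scale computation underlying part (c). Because $d_k=[2^{2\alpha k/(2-\alpha)}]$ grows geometrically in $k$, the sum is of the order of its top term $d_K/K$ at $K=(\frac1\alpha-\frac12)\log n=\frac{2-\alpha}{2\alpha}\log n$; by the very definition of $d_k$ and since $\log$ is base $2$, $d_K\approx 2^{\frac{2\alpha}{2-\alpha}\cdot\frac{2-\alpha}{2\alpha}\log n}=n$. Thus $\frac1n\sum_k d_k/k\approx \frac1n\cdot\frac nK\asymp\frac1{\log n}\to0$. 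This is the precise point where the choice of $d_k$ and the cutoff at $(\frac1\alpha-\frac12)\log n$ interact: the top scale $d_K$ just reaches $n$, while the harmonic factor $1/K\asymp1/\log n$ renders the contribution negligible --- in sharp contrast with the middle range of Lemma \ref{lem: 2 CLT}, where the analogous normalized sum tends to the positive constant $2\ln(2/(2-\alpha))$. The same interplay (top scale $\approx n$, divided by $\log n$) is what produces the $n^{-(2-\alpha)/2}/\log n$ decay in the second-moment estimate of part (c); verifying these two exponent computations is the main technical content, everything else being a transcription of Subsection \ref{sub: prop 7.(b)}.
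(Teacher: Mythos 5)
Your proposal is correct and follows essentially the same route as the paper: part (a) via continuity and the distributional identity of Corollary \ref{cor: embed}, part (b) via the deterministic $\log n$ bound from Lemma \ref{lem: facts on Z,X,Y}.(a), part (c) via the split into $\widehat{\mathbf X}$ (Borel--Cantelli) and $\widetilde{\mathbf X}$ (variance bound from Lemma \ref{lem: variance of truncation} plus Chebyshev), and part (d) via the dispersion computation and Fact \ref{Facts of life}. Your exponent computations ($n^{-(2-\alpha)/2}$ and the $1/\log n$ decay of the normalized dispersion) agree with the paper's.
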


\begin{proof}
	Fix $n\in\N$ and note that $\mathsf{G}_n(f)$ is a continuous function of $\mathbf{F}_n:=\left(f_k\circ T^j:\ 1\leq k\leq \left(\frac{1}{\alpha}-\frac{1}{2}\right)\log(n), 0\leq j<d_k\right)$. Since $\mathbf{F}_n$ and $\left(Z_k(j):\ 1\leq k\leq \left(\frac{1}{\alpha}-\frac{1}{2}\right)\log(n), 1\leq j\leq d_k\right)$ are equally distributed we see that part (a) holds. 
	
	Similarly as in the proof of Lemma \ref{lem: middle 2}.(a), for all $n\in\N$, 
	\begin{align*}
		\left|\mathsf{G}_n(\mathbf{Z})-\mathsf{G}_n(\mathbf{Y})\right|&\leq \sum_{k=1}^{(\frac{1}{\alpha}-\frac{1}{2})\log(n)} \sum_{j=1}^{d_k}\left|Z_k(j)-Y_k(j)\right|\\
		&\leq \sum_{k=1}^{(\frac{1}{\alpha}-\frac{1}{2})\log(n)} \sum_{j=1}^{d_k} \frac{1}{d_k}=o\left(n^{1/\alpha}\right),
	\end{align*}
	concluding the proof of part (b). 
	
	Now for all $n\in\N$, 
	\[
	\mathsf{G}_n(\mathbf{X})-\mathsf{G}_n(\mathbf{Y})=\mathsf{G}_n\big(\mathbf{\widehat{X}}\big)-\mathsf{G}_n\big(\mathbf{\widetilde{X}}\big).
	\]
	Part (c) follows from Lemma \ref{lem: last}. 
	
	As in the proof of Lemma \ref{lem: 2 CLT}, $\sum_{j=1}^{d_k} X_k(j)\sim^d \aS\left(\sqrt[\alpha]{\frac{d_k}{k}}\right)$ as sum of $d_k$ i.i.d. $\aS\left(\sqrt[\alpha]{\frac{1}{k}}\right)$ random variables.  By the triangular array property, $\left\{\sum_{j=1}^{d_k} X_k(j):\ k\in\N\right\}$ are independent and consequently,
	\[
	\frac{\mathsf{G}_n(\mathbf{X})}{n^{1/\alpha}}=\frac{1}{n^{1/\alpha}} \sum_{k=1}^{(\frac{1}{\alpha}-\frac{1}{2})\log(n)}\left( \sum_{j=1}^{d_k} X_k(j) \right)\sim^d \aS\left(\sigma(n)\right),
	\]
	where \footnote{recall that for $k\leq (\frac{1}{\alpha}-\frac{1}{2})\log(n)$, $d_k\leq n$.}
	\[
	\sigma(n)^\alpha=\frac{1}{n}\sum_{k=1}^{(\frac{1}{\alpha}-\frac{1}{2})\log(n)}\frac{d_k}{k} \lesssim   \frac{1}{\log(n)}\xrightarrow[n\to\infty]{} 0. 
	\]
	Part (d) now follows from Fact \ref{Facts of life}. 
\end{proof}

\begin{lemma}
	\label{lem: last} $ $
	\begin{itemize}
		\item Almost surely, $\lim_{n\to\infty}\mathsf{G}_n\big(\mathbf{\widehat{X}}\big)=\sum_{k=1}^\infty \sum_{j=1}^{d_k}\widehat{X}_k(j)\in\R$.
		\item $n^{-1/\alpha}\mathsf{G}_n\big(\mathbf{\widetilde{X}}\big)\xrightarrow[n\to\infty]{}0$ in probability.
	\end{itemize}
	
\end{lemma}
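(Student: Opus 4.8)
The plan is to prove the two bullets by entirely different mechanisms: the first is the almost-sure convergence of a random series whose nonzero terms are exponentially rare, which I would settle with Borel--Cantelli, and the second is a second-moment estimate fed into Markov's inequality, of exactly the same flavour as Lemma \ref{lem: V underline}.

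For the first bullet I would recycle the event
\[
A_k=\left\{\exists\, j\in[1,2d_k]:\ \widehat{X}_k(j)\neq 0\right\}
\]
already introduced in the proof of Lemma \ref{lem: V overline}. There it is shown, from the tail bound $\P(|X_k(1)|\geq 2^{k^2})\leq C_\alpha k^{-1}2^{-\alpha k^2}$ and a union bound, that $\P(A_k)\leq 2C_\alpha d_k 2^{-\alpha k^2}$ is summable, so that $\P$-almost surely only finitely many $A_k$ occur. On that full-measure event $\widehat{X}_k(j)=0$ for every $j$ once $k$ is large, so the double series $\sum_{k}\sum_{j=1}^{d_k}\widehat{X}_k(j)$ has almost surely only finitely many nonzero terms and hence converges to a finite real number. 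Since $\left(\frac{1}{\alpha}-\frac{1}{2}\right)\log(n)\to\infty$, the partial sums $\mathsf{G}_n(\mathbf{\widehat{X}})$ stabilise at that value, which is precisely the first claim.

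For the second bullet, each $\widetilde{X}_k(j)$ is symmetric, hence centered, and the family $\{\widetilde{X}_k(j)\}$ is independent across both indices inside $\mathsf{G}_n$, so $\mathsf{G}_n(\mathbf{\widetilde{X}})$ is a sum of independent mean-zero variables with
\[
\mathrm{Var}\big(\mathsf{G}_n(\mathbf{\widetilde{X}})\big)=\sum_{k=1}^{(\frac{1}{\alpha}-\frac{1}{2})\log(n)} d_k\,\mathrm{Var}\big(\widetilde{X}_k(1)\big).
\]
Applying Lemma \ref{lem: variance of truncation} with $K=2^k$ to the $\aS\left(k^{-1/\alpha}\right)$ variable $X_k(1)$ gives $\mathrm{Var}\big(\widetilde{X}_k(1)\big)\leq C\,k^{-1}2^{(2-\alpha)k}$, so the $k$-th summand is $\lesssim k^{-1}2^{ck}$ with $c=\frac{2\alpha}{2-\alpha}+(2-\alpha)=\frac{\alpha^2-2\alpha+4}{2-\alpha}>0$. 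As $c>0$ the series grows geometrically and is dominated by its top term at $k=\left(\frac{1}{\alpha}-\frac{1}{2}\right)\log(n)=\frac{2-\alpha}{2\alpha}\log(n)$; substituting $d_k=\big[2^{2\alpha k/(2-\alpha)}\big]$ and dividing by $n^{2/\alpha}$ I expect to reach
\[
\frac{1}{n^{2/\alpha}}\mathrm{Var}\big(\mathsf{G}_n(\mathbf{\widetilde{X}})\big)\lesssim \frac{n^{(\alpha-2)/2}}{\log(n)}\xrightarrow[n\to\infty]{}0,
\]
where the decay is exactly where $\alpha<2$ is used. Since $n^{-1/\alpha}\mathsf{G}_n(\mathbf{\widetilde{X}})$ is centered, Markov's inequality then gives convergence to $0$ in probability.

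The main obstacle is purely the bookkeeping in the second bullet: confirming that the geometric series over $k$ is controlled by its largest term and that, after inserting the definition of $d_k$ and the upper limit $k=\frac{2-\alpha}{2\alpha}\log(n)$, the resulting power of $n$ lies strictly below $2/\alpha$. This reduces to the elementary inequality $\alpha(\alpha-2)<0$, valid throughout $(0,2)$, which is precisely what the exponent $\frac{2\alpha k}{2-\alpha}$ in the definition of $d_k$ is engineered to guarantee.
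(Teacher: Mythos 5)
Your proposal is correct and follows essentially the same route as the paper's proof: Borel--Cantelli applied to the events $A_k$ for the first bullet, and for the second bullet the variance bound of Lemma \ref{lem: variance of truncation} with $K=2^k$, independence across both indices, domination of the geometric sum by its top term at $k=\left(\frac{1}{\alpha}-\frac{1}{2}\right)\log(n)$, and Chebyshev/Markov. Your final exponent $n^{(\alpha-2)/2}$ agrees exactly with the paper's $n^{1-\frac{2}{\alpha}+\frac{(2-\alpha)^2}{2\alpha}}$.
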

\begin{proof}
	The proof of the first claim goes along similar lines to the proof of Lemma \ref{lem: V overline}. Write $A_k:=\left\{\exists j\in [1,d_k], \widehat{X}_k(j)\neq 0 \right\}$. By the union bound and Proposition \ref{prop: tails}, 
	\[
	\P\left(A_k\right)\leq d_k\P\left(\left|X_k(1)\right|>2^{k^2}\right)\leq C_\alpha \frac{d_k}{k}2^{-\alpha k^2}.
	\] 	
	Since the right hand side is summable, it follows from the Borel-Cantelli lemma that almost surely, $A_k$ holds for only finitely many $k$. This implies that almost surely 
	\[
	\#\left\{(k,j)\in\N^2: X_k(j)\neq 0\right\}<\infty.
	\]
	Consequently $\sum_{k=1}^\infty \sum_{k=1}^{d_k}\widehat{X}_k(j)$ is almost surely a sum of finitely many terms. This concludes the proof of the first part. 
	
	For the second part, note that by independence of $\left(X_k(j)\right)_{j=1}^{d_k}$ and Lemma \ref{lem: variance of truncation}, there exists $C>0$ so that
	\[
	\mathrm{Var}\left(\sum_{j=1}^{d_k} \widetilde{X}_k(j)\right)=\sum_{j=1}^{d_k}\mathrm{Var}\left(\widetilde{X}_k(j)\right)\leq C \frac{d_k2^{(2-\alpha) k}}{k}.
	\]
	As $\left\{\sum_{j=1}^{d_k}\widetilde{X}_k(j):\ k\in\N\right\}$ are independent, centred and square integrable random variables, writing $\kappa_n=\left(\frac{1}{\alpha}-\frac{1}{2}\right)\log(n)$, we have
	
	\begin{align*}
		\mathbb{E}\left(\left(n^{-1/\alpha}\mathsf{G}_n\big(\mathbf{\widetilde{X}}\big) \right)^2\right)&=n^{-2/\alpha}\sum_{k=1}^{\kappa_n}\mathrm{Var}\left(\sum_{j=1}^{d_k} \widetilde{X}_k(j)\right)\\
		&\leq Cn^{-2/\alpha}\sum_{k=1}^{\kappa_n} \frac{d_k2^{(2-\alpha) k}}{k}\\
		&\lesssim n^{-2/\alpha}d_{\kappa_n}\frac{2^{(2-\alpha) \kappa_n}}{\kappa_n}\\
		&\leq n^{1-\frac{2}{\alpha}}2^{\frac{(2-\alpha)^2}{2\alpha}\log(n)}=n^{1-\frac{2}{\alpha}+\frac{(2-\alpha)^2}{2\alpha}}\xrightarrow[n\to\infty]{}0,
	\end{align*}
	since for $\alpha\in(0,2)$, 
	\[
	1-\frac{2}{\alpha}+\frac{(2-\alpha)^2}{2\alpha}=\frac{\alpha^2-2\alpha}{2\alpha}<0. 
	\]
	The second part follows from a routine application of Markov's inequality. 
\end{proof}
We can now conclude the proof of Proposition \ref{MainCLT}.(a). 
\begin{proof}[Proof of Proposition \ref{MainCLT}.(a)]
	Since $\mathsf{G}_n(f)=^dU^n\left(\mathsf{G}_n(f)\right)$,  it follows from Proposition \ref{prop:small 1}.(b) that $\frac{1}{n^{1/\alpha}}\mathsf{G}_n(f)$ and $\frac{1}{n^{1/\alpha}}U^n\left(\mathsf{G}_n(f)\right)$ converge to $0$ in probability. By Proposition \ref{prop:small 1} we see that $\frac{1}{n^{1/\alpha}}S_n^{(\mathbf{S})}(f)$ converges to $0$ in probability. 
\end{proof}

\section{Appendix: Growth of dispersion for stationary $\aS$ processes}
As $\aS$ random variables are defined by their characteristic functions, L\'evy's continuity theorem implies the following fact. 
\begin{fact}\label{Facts of life}
If for all $n\in\N$, $Z_n$ is $\aS(\sigma_n)$ distributed and $\lim_{n\to\infty}\sigma_n^\alpha=A^\alpha$, then $Z_n\Rightarrow^d \aS(A)$. In addition,
If $\sigma_n\to 0$ then $Z_n\Rightarrow^d 0$. 
\end{fact}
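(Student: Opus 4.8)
The plan is to apply L\'evy's continuity theorem directly, exploiting that a symmetric $\alpha$-stable law is completely determined by its dispersion through an explicit characteristic function.

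First I would specialise the characteristic function displayed in the introduction to the symmetric case $\beta=\mu=0$, so that a random variable $Z\sim\aS(\sigma)$ has $\E(\exp(i\theta Z))=\exp(-\sigma^\alpha|\theta|^\alpha)$ for all $\theta\in\R$. Applying this to each $Z_n$, the characteristic function of $Z_n$ is the map $\theta\mapsto\exp(-\sigma_n^\alpha|\theta|^\alpha)$.

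Next, I would fix $\theta\in\R$ and pass to the limit in $n$. Since $x\mapsto\exp(-x|\theta|^\alpha)$ is continuous and $\sigma_n^\alpha\to A^\alpha$ by hypothesis, the characteristic functions converge pointwise to $\exp(-A^\alpha|\theta|^\alpha)$, which is exactly the characteristic function of an $\aS(A)$ variable. As this limiting function is continuous at $\theta=0$ (indeed everywhere), L\'evy's continuity theorem then yields $Z_n\Rightarrow^d\aS(A)$. For the second assertion I would simply note that $\sigma_n\to0$ forces $\sigma_n^\alpha\to0$, so the same computation gives $\lim_{n\to\infty}\E(\exp(i\theta Z_n))=\exp(0)=1$ for every $\theta$, the characteristic function of the point mass at $0$; L\'evy's theorem again gives $Z_n\Rightarrow^d 0$. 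This second claim is just the degenerate $A=0$ instance of the first.

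I do not anticipate any genuine obstacle here: the only step warranting a moment's care is confirming that the pointwise limit $\exp(-A^\alpha|\theta|^\alpha)$ is continuous at the origin, which is what licenses the use of L\'evy's continuity theorem, and this is immediate.
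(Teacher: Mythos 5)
Your proposal is correct and is essentially the paper's own argument: the paper simply notes that since $\aS$ laws are determined by their characteristic functions $\theta\mapsto\exp(-\sigma^\alpha|\theta|^\alpha)$, L\'evy's continuity theorem gives the claim, which is exactly the computation you spell out. No gaps.
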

The following tail bound is used extensively in this work. 
\begin{proposition}\label{prop: tails}
	There exists $C_\alpha>0$ such that for all $0<\sigma\leq 1$, if $X$ is an $\aS(\sigma)$ random variable and $t\geq 1$ then,
	\[
	\P\left(|X|\geq t\right)\leq  C_\alpha \sigma^\alpha  t^{-\alpha}
	\]
\end{proposition}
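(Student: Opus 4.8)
The plan is to reduce to the standardized case $\sigma = 1$ via the scaling property of stable laws, and then to combine the classical tail asymptotics with a simple boundedness argument to extract a uniform constant. First I would record the scaling identity: if $X$ is $\aS(\sigma)$ and $X_1$ is $\aS(1)$, then comparing characteristic functions gives $\E(e^{i\theta X}) = \exp(-\sigma^\alpha|\theta|^\alpha) = \E(e^{i\theta\sigma X_1})$ for every $\theta\in\R$, so $X \eqd \sigma X_1$. Consequently, for $t \geq 1$ and $0 < \sigma \leq 1$,
\[
\P\left(|X| \geq t\right) = \P\left(|X_1| \geq t/\sigma\right),
\]
and writing $s := t/\sigma \geq t \geq 1$ (using $\sigma\le 1$) we have $\sigma^\alpha t^{-\alpha} = s^{-\alpha}$. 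Thus it suffices to produce a single constant $C_\alpha$ with $\P(|X_1| \geq s) \leq C_\alpha s^{-\alpha}$ for every $s \geq 1$, since then the claimed inequality for general $\sigma$ follows by substituting back $s = t/\sigma$.

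Next I would invoke the classical tail behaviour of symmetric $\alpha$-stable laws (see \cite{MR1280932}): there is a finite constant $c_\alpha$ with $\lim_{s\to\infty} s^\alpha\,\P(|X_1| \geq s) = c_\alpha$. Since the law of $X_1$ is non-atomic, the map $g(s) := s^\alpha\,\P(|X_1| \geq s)$ is continuous on $[1,\infty)$, and it has a finite limit as $s\to\infty$. A continuous function on $[1,\infty)$ with a finite limit at infinity is bounded, so $C_\alpha := \sup_{s \geq 1} g(s) < \infty$. With this choice of $C_\alpha$ the reduction of the previous paragraph yields the proposition.

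The only genuine content lies in the tail asymptotics of the standardized law, which is a standard fact we cite rather than reprove; everything else is the scaling normalization and a routine boundedness argument. The point to be careful about is that $C_\alpha$ must be independent of both $\sigma$ and $t$: this is exactly what the passage to $s = t/\sigma$ guarantees, as the entire dependence on $\sigma$ and $t$ is absorbed into the single power $s^{-\alpha}$, leaving a supremum that depends on $\alpha$ alone.
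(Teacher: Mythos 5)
Your proposal is correct and follows essentially the same route as the paper: cite the classical tail asymptotic $\P(|X_1|\geq s)\sim c_\alpha s^{-\alpha}$ for the standardized law, take $C_\alpha=\sup_{s\geq 1}s^\alpha\P(|X_1|\geq s)<\infty$, and reduce the general case via the scaling $X\eqd\sigma X_1$ together with $t/\sigma\geq 1$. The only cosmetic difference is that you justify finiteness of the supremum via continuity of the tail function, whereas the paper asserts it directly (the trivial bound $s^\alpha\P(|X_1|\geq s)\leq s^\alpha$ on compact subsets of $[1,\infty)$ already suffices).
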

\begin{proof}
By Proposition 1.2.15 in \cite{MR1280932}, there exists $c_\alpha>0$ such that if $X\sim^d \aS(1)$, then
\[
	\P\left(|X|\geq t\right)\sim  c_\alpha   t^{-\alpha},\ \ \text{as}\ t\to\infty. 
\]
We deduce that 
\[
C_\alpha:= \sup_{t\geq 1}\frac{\P\left(|X|\geq t\right)}{t^{-\alpha}}<\infty. 
\]
Finally if $X\sim^d\aS(\sigma)$ with $\sigma\leq 1$ and $t\geq 1$, we have 
\[
\P\left(|X|\geq t\right)=\P\left(\left|\frac{X}{\sigma}\right|\geq \frac{t}{\sigma}\right)\leq C_\alpha \left(\frac{t}{\sigma}\right)^{-\alpha}. 
\]
\end{proof}
The tail bound implies the following inequality for the variance.
\begin{lemma}\label{lem: variance of truncation}
	There exists $c=c(\alpha)>0$ such that for all $K\geq 1$, $0<\sigma\leq1$ and $m\in\N$, if $X$ is a $S\alpha S(\sigma)$ random variable, then
	\[
	\mathrm{Var}\left(X1_{\left[|X|\leq K\right]} \right)\leq cK^{2-\alpha}\sigma^{\alpha}.
	\]
\end{lemma}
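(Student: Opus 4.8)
The plan is to reduce the variance to a truncated second moment, and then control that moment by integrating the tail estimate of Proposition \ref{prop: tails} against the obvious density of levels. Since $X$ is symmetric $\alpha$-stable, the truncated variable $X\mathbf{1}_{[|X|\le K]}$ is again symmetric about $0$ and hence has mean zero, so that
\[
\mathrm{Var}\left(X\mathbf{1}_{[|X|\le K]}\right)=\mathbb{E}\left(X^2\mathbf{1}_{[|X|\le K]}\right).
\]
The quantity on the right is the expectation of a nonnegative random variable bounded above by $K^2$, so I would apply the layer-cake identity $\mathbb{E}(Y)=\int_0^\infty\P(Y>s)\,ds$ together with the substitution $s=t^2$ to obtain
\[
\mathbb{E}\left(X^2\mathbf{1}_{[|X|\le K]}\right)=\int_0^{K^2}\P\left(X^2\mathbf{1}_{[|X|\le K]}>s\right)ds\le\int_0^{K^2}\P\left(|X|>\sqrt{s}\right)ds=\int_0^{K}2t\,\P\left(|X|>t\right)dt.
\]

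The decisive point, and the heart of the estimate, is that the tail bound of Proposition \ref{prop: tails} carries the correct factor $\sigma^\alpha$ all the way down to $t=\sigma$, not merely for $t\ge 1$. Indeed, rescaling $X/\sigma\sim^d\aS(1)$ shows $\P(|X|>t)=\P(|X/\sigma|>t/\sigma)\le C_\alpha(t/\sigma)^{-\alpha}=C_\alpha\sigma^\alpha t^{-\alpha}$ as soon as $t/\sigma\ge 1$, i.e.\ for every $t\ge\sigma$, with the same constant $C_\alpha$ produced in the proof of that proposition. I would therefore split the last integral at $t=\sigma$. On $[0,\sigma]$ I use the trivial bound $\P(|X|>t)\le 1$, which contributes $\int_0^\sigma 2t\,dt=\sigma^2$; since $0<\sigma\le 1\le K$ and $2-\alpha>0$ this is at most $\sigma^\alpha\sigma^{2-\alpha}\le\sigma^\alpha K^{2-\alpha}$. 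On $[\sigma,K]$ I insert $\P(|X|>t)\le C_\alpha\sigma^\alpha t^{-\alpha}$ and use that $\alpha<2$ makes $t^{1-\alpha}$ integrable, giving
\[
\int_\sigma^K 2t\,C_\alpha\sigma^\alpha t^{-\alpha}\,dt=2C_\alpha\sigma^\alpha\,\frac{K^{2-\alpha}-\sigma^{2-\alpha}}{2-\alpha}\le\frac{2C_\alpha}{2-\alpha}\,\sigma^\alpha K^{2-\alpha}.
\]
Adding the two contributions establishes the lemma with $c=1+\frac{2C_\alpha}{2-\alpha}$.

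I expect the only genuine obstacle to be the small-$t$ regime. If one were to apply Proposition \ref{prop: tails} only in its stated range $t\ge 1$, the range $t\in[\sigma,1)$ would have to be handled by the trivial bound, producing a contribution of order $1$ rather than of order $\sigma^\alpha K^{2-\alpha}$, and the claimed $\sigma^\alpha$ dependence would be lost precisely when $\sigma$ is small. The resolution is to observe that the scaling argument in the proof of Proposition \ref{prop: tails} in fact yields the tail bound for all $t\ge\sigma$; once this is noted, the remaining computation is the routine integration above, where the constraint $\alpha\in(0,2)$ enters exactly to guarantee integrability of $t^{1-\alpha}$ near infinity and to control the small-scale term. (The parameter $m\in\N$ in the statement plays no role and can be ignored.)
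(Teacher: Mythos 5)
Your proof is correct and follows essentially the same route as the paper's: symmetry makes the truncated variable mean zero, the layer-cake formula reduces the variance to $\int_0^K 2t\,\P(|X|>t)\,dt$, and the tail bound of Proposition \ref{prop: tails} is integrated against $2t\,dt$. The one point where you genuinely sharpen the argument is the small-$t$ regime, which you correctly identify as the crux: the paper bounds the contribution of $t\in[0,1]$ by the constant $1$ and absorbs it into a $[1+o_{K\to\infty}(1)]$ factor, which yields the uniform bound $cK^{2-\alpha}\sigma^\alpha$ only when $\sigma^\alpha K^{2-\alpha}$ stays bounded below (true in the paper's application, where $\sigma=k^{-1/\alpha}$ and $K=2^k$, but not for arbitrary $K\geq 1$ and $\sigma\leq 1$). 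Your observation that the rescaling in Proposition \ref{prop: tails} actually gives $\P(|X|>t)\leq C_\alpha\sigma^\alpha t^{-\alpha}$ for all $t\geq\sigma$, combined with the trivial estimate $\sigma^2\leq\sigma^\alpha K^{2-\alpha}$ on $[0,\sigma]$, removes this caveat and proves the lemma exactly as stated.
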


\begin{proof}
	As $X$ is symmetric the random variable $X1_{\left[|X|\leq K\right]}$ has zero mean.	By Proposition \ref{prop: tails} there exists $C_\alpha>0$ such that, 
	\begin{align*}
		\mathrm{Var}\left(X 1_{\left[|X|\leq K\right]} \right)&=\mathbb{E}((X1_{[|X|\leq K]})^2)\\
		&=\int x\P(|X|1_{|X|\geq K}>x)dx\\
		&\leq 1+\int_1^K x\P(|X|>x)dx\\
		&\leq [1+o_{K\to\infty}(1)]C_\alpha\sigma^\alpha\int_1^K x^{1-\alpha}dx \\
		&=C_\alpha\cdot\sigma^\alpha \frac{K^{2-\alpha}}{2-\alpha}[1+o_{K\to\infty}(1)].
	\end{align*}
	We conclude that there exists $C$ depending only on $\alpha$ such that for all $K\geq 1$, 
	\[
	\mathrm{Var}\left(X1_{[|X|\leq K]}\right)\leq C\sigma^\alpha K^{2-\alpha}. 
	\]
\end{proof}

In our construction of CLT functions we used a triangular array of random variables $Y_k$ which are not $\aS$ distributed but are in the domain of attraction of an $\aS$ distribution. A main reason for this choice lies in the fact that the dispersion of a stationary $\aS$ process does not go fast enough for the methods of \cite{MR1624218} to work.

 A real valued stationary process $(X_n)_{n=1}^\infty$ is a $\aS$ process if every $Z$ in the linear span of $\{X_n:n\in \N\}$ is $\aS$ distributed. In that case the function 
\[
\|X\|_\alpha=\left(-\log\mathbb{E}\left(e^{iX}\right)\right)^{1/\alpha},
\]
is a quasi-norm from $\mathrm{Lin}(\mathbf{X}):=\mathrm{span}\{X_n:n\in\N\}$ to $[0,\infty)$ and for all $Z\in\mathrm{Lin}(\mathbf{X})$, $\|Z\|_\alpha$ equals the dispersion parameter of $Z$. The following is a well known fact on stationary $\aS$ processes. 
\begin{fact}\label{Fact: Lesigne}
If $0<\alpha<1$ and $(X_n)_{n=1}^\infty$ is a stationary $\aS$ process, then for every $N\in\N$, 
\[
\left\|\sum_{j=1}^N X_j\right\|_\alpha^\alpha\leq N\left\|X_1\right\|_\alpha^\alpha.
\]
\end{fact}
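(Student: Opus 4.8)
The plan is to reduce the claimed inequality to the elementary subadditivity of $t\mapsto t^\alpha$ on $[0,\infty)$ for $0<\alpha<1$, via the spectral (integral) representation of symmetric $\alpha$-stable vectors. First I would invoke the representation theorem for $\aS$ random vectors (e.g.\ from \cite{MR1280932}): since $(X_1,\dots,X_N)$ is a jointly $\aS$ vector, there is a measure space $(E,\mathcal E,\mu)$ and functions $f_1,\dots,f_N\in L^\alpha(E,\mu)$ such that for every $\theta\in\R^N$,
\[
\mathbb E\exp\Big(i\sum_{j=1}^N\theta_jX_j\Big)=\exp\Big(-\int_E\Big|\sum_{j=1}^N\theta_jf_j\Big|^\alpha\,d\mu\Big).
\]
Taking $\theta$ to pick out a single linear combination $Z=\sum_j\theta_jX_j$ and comparing with the definition of $\|\cdot\|_\alpha$, this identifies the quasi-norm with the $L^\alpha$-norm of the spectral function: $\|Z\|_\alpha^\alpha=\int_E|\sum_j\theta_jf_j|^\alpha\,d\mu$. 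In particular $\|X_j\|_\alpha^\alpha=\int_E|f_j|^\alpha\,d\mu$ and $\big\|\sum_{j=1}^NX_j\big\|_\alpha^\alpha=\int_E\big|\sum_{j=1}^Nf_j\big|^\alpha\,d\mu$.

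The heart of the argument is then the pointwise inequality $|a_1+\dots+a_N|^\alpha\le|a_1|^\alpha+\dots+|a_N|^\alpha$, valid for all reals $a_j$ precisely because $0<\alpha<1$ makes $t\mapsto t^\alpha$ subadditive on $[0,\infty)$. Applying it with $a_j=f_j(x)$ and integrating against $\mu$ gives
\[
\Big\|\sum_{j=1}^NX_j\Big\|_\alpha^\alpha=\int_E\Big|\sum_{j=1}^Nf_j\Big|^\alpha\,d\mu\le\sum_{j=1}^N\int_E|f_j|^\alpha\,d\mu=\sum_{j=1}^N\|X_j\|_\alpha^\alpha.
\]
Finally, by stationarity each $X_j$ has the same law as $X_1$, hence the same dispersion, so $\|X_j\|_\alpha=\|X_1\|_\alpha$ for every $j$ and the right-hand side equals $N\|X_1\|_\alpha^\alpha$, as required.

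The only nontrivial ingredient is the spectral representation together with the identification $\|Z\|_\alpha^\alpha=\int_E|\cdot|^\alpha\,d\mu$; everything else is the pointwise inequality and stationarity. I expect the representation step to be the main point to get right—specifically, checking that a \emph{single} joint representation can be chosen simultaneously for all of $X_1,\dots,X_N$, so that the same $\mu$ and the same $f_j$ serve every linear combination, rather than representing each $X_j$ in isolation. Since this is exactly the content of the standard structure theorem for $\aS$ vectors in \cite{MR1280932}, the obstacle is essentially one of citation. An alternative, representation-free route would be to establish directly that $Z\mapsto\|Z\|_\alpha^\alpha$ is subadditive on $\mathrm{Lin}(\mathbf X)$ when $\alpha<1$, but this essentially reproves the same fact.
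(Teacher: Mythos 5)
Your proof is correct and takes essentially the same route as the paper: the paper simply cites the subadditivity $\|X\|_\alpha^\alpha+\|Y\|_\alpha^\alpha-\|X+Y\|_\alpha^\alpha\geq 0$ for jointly $\aS$ variables with $0<\alpha<1$ (Property 2.10.5 of \cite{MR1280932}) and then inducts, whereas you unpack that cited property by deriving it from the joint spectral representation and the pointwise subadditivity of $t\mapsto t^\alpha$. Both arguments then conclude identically via stationarity, so the only real difference is that you reprove the quoted lemma rather than citing it.
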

\begin{proof}
By \cite[Property 2.10.5]{MR1280932}, if $X,Y$ are $\aS$ random variables with $0<\alpha<1$, then
\[
\|X\|_\alpha^\alpha+\|Y\|_\alpha^\alpha-\left\|X+Y\right\|_\alpha^\alpha\geq 0. 
\]
A straightforward inductive procedure gives the claim. 
\end{proof}
\begin{remark}
One can show using stochastic integrals that there is equality if and only if $X_1,\ldots,X_N$ are independent. 
\end{remark}

\subsection*{Acknowledgement}
We thank the referee for his/her valuable remarks.

\bibliographystyle{abbrv}
\bibliography{embedding.bib}

\end{document}